   \theoremstyle{definition}
    \newtheorem{dfn}{Definition}[section]
    \newtheorem{prop}[dfn]{Proposition}
    \newtheorem{lem}[dfn]{Lemma}
    \newtheorem{thm}[dfn]{Theorem}
    \newtheorem{cor}[dfn]{Corollary}
    \newtheorem{rem}[dfn]{Remark}
    \newtheorem{examp}[dfn]{Example}
\numberwithin{equation}{section}
\title{Orientation double covers of non-orientable Lefschetz fibrations}
\author{Tomoya Yoshikawa}
\date{}
\begin{document}

\maketitle

\begin{abstract}
In this paper, we prove that the composition of the standard orientation double covering map and a non-orientable Lefschetz fibration is an achiral Lefschetz fibration and specify a monodromy factorization of this composition. As an application of these results, we also give three transformations with respect to monodromy factorizations of non-orientable Lefschetz fibrations which do not change their isomorphism classes via the similar result to orientable Lefschetz fibrations.
\end{abstract}

\section{Introduction.}

Lefschetz fibrations are widely recognized as a tool for studying orientable 4-manifolds. One reason for this is that they serve as a bridge connecting symplectic topology and mapping class groups of orientable surfaces. Donaldson \cite{Donaldson} showed that every closed symplectic 4-manifold admits a Lefschetz fibration over a sphere after some blow-ups, and Gompf \cite{G and S} showed that the total space of a Lefschetz fibration has a symplectic structure if its regular fibers are not trivial in homology. Given a Lefschetz fibration over a sphere, one can obtain a product of Dehn twists satisfying some conditions, which is called its monodromy factorization. Conversely, given such a product of Dehn twists, a Lefschetz fibration over a sphere is constructed (see \cite{G and S} for example).\par
Another reason why Lefschetz fibrations are attractive for the study of orientable 4-manifolds is that one can construct exotic orientable 4-manifolds via mapping class groups of orientable surfaces. Kas \cite{Kas} and Matsumoto \cite{Matsumoto} connected isomorphism classes of Lefschetz fibrations to Hurwitz equivalence for their monodromy factorizations using monodromies (see Theorem \ref{thm4.5} in this paper). Moreover, it became clear, by \cite{E and G} and \cite{E and M and V}, that lantern and daisy substitutions correspond to rational blowdowns (cf. \cite{F and S}) on orientable 4-manifolds. These results have contributed to combinatorial constructions of exotic orientable 4-manifolds.\par
Recently, attempts to understand non-orientable 4-manifolds via Lefschetz fibrations are made (see \cite{M and O} for example). Research on this topic is still insufficient compared to the orientable case, but as can be inferred from the above, it is expected to contribute to combinatorial discussions of non-orientable 4-manifolds.\par
In this paper, we study non-orientable Lefschetz fibrations focusing on the relationship between them and orientable ones. In order to do this, we first show the following proposition in subsection \ref{subsec.2.3}, which associates orientable and non-orientable Lefschetz fibrations.

\begin{prop}\label{prop1.1}
\textit{Let \(f:X \to \Sigma\) be a genus-g non-orientable Lefshetz fibration and \(\pi:\widetilde{X} \to X\) the standard orientation double covering map. Then the composition \(f \circ \pi:\widetilde{X} \to \Sigma\) is an achiral Lefschetz fibration of genus-\((g-1)\) with the same number of positive critical points and negative critical points.}
\end{prop}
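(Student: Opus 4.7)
The plan is to verify for $f\circ\pi$ the three defining features of an achiral Lefschetz fibration: every critical point is of Lefschetz or anti-Lefschetz type, a generic fiber is a closed orientable genus-$(g-1)$ surface, and the numbers of positive and negative critical points agree. Since $\pi:\widetilde X\to X$ is a covering map, it is a local diffeomorphism, $f\circ\pi$ is smooth, and $\mathrm{Crit}(f\circ\pi)=\pi^{-1}(\mathrm{Crit}(f))$, with each $p\in\mathrm{Crit}(f)$ having exactly two preimages $\tilde p_\pm$. On a small contractible neighborhood $U\ni p$ the cover trivializes as $\pi^{-1}(U)=U_+\sqcup U_-$ with $\pi|_{U_\pm}$ a diffeomorphism, so in any oriented Lefschetz chart around $p$ in which $f$ has the standard form $(z_1,z_2)\mapsto z_1^2+z_2^2$, the lift $f\circ\pi$ has the same complex Lefschetz form on each sheet; in particular each $\tilde p_\pm$ is an isolated Lefschetz-type critical point.

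For the fiber genus, fix a regular value $q\in\Sigma$ and identify $f^{-1}(q)$ with the closed non-orientable surface $N_g$ of genus $g$. Because the normal bundle $\nu$ of $f^{-1}(q)$ in $X$ is the pullback of $T_q\Sigma$ and is hence trivial, one has
\[
w_1(TX)|_{f^{-1}(q)}=w_1(TN_g)+w_1(\nu)=w_1(N_g).
\]
Thus the homomorphism $\pi_1(N_g)\to\mathbb Z/2$ classifying the restricted cover $\pi|_{(f\circ\pi)^{-1}(q)}\to f^{-1}(q)$ coincides with the one classifying the orientation double cover of $N_g$. Consequently $(f\circ\pi)^{-1}(q)$ is the orientation double cover of $N_g$, which is connected (since $N_g$ is non-orientable) and closed orientable with Euler characteristic $2\chi(N_g)=2(2-g)$, hence diffeomorphic to the genus-$(g-1)$ orientable surface.

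The delicate step, which I view as the main obstacle, is the sign count. Fix once and for all the canonical orientation on $\widetilde X$; the base $\Sigma$ is already oriented. Over the contractible $U\ni p$ above, the two sheets $U_\pm$ correspond to the two local orientations of $X$ at points of $U$, so transporting the chosen orientation of $\widetilde X$ back to $U$ through $\pi|_{U_+}$ and through $\pi|_{U_-}$ yields opposite orientations on $U$. Reading the standard Lefschetz model $(z_1,z_2)\mapsto z_1^2+z_2^2$ through the two comparisons, one sheet makes $f\circ\pi$ orientation-compatible (holomorphic) with respect to the fixed orientations of $\widetilde X$ and $\Sigma$, while the other, after correcting to an oriented chart, presents $f\circ\pi$ as antiholomorphic. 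Hence precisely one of $\tilde p_+,\tilde p_-$ is a positive Lefschetz critical point of $f\circ\pi$ and the other is negative; summing over $p\in\mathrm{Crit}(f)$ produces equal numbers of positive and negative critical points, completing the proof.
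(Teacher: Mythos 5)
Your proof is correct, and its core --- the two sheets of $\pi$ over a Lefschetz chart carrying opposite local orientations, so that each critical point of $f$ lifts to one positive and one negative critical point of $f\circ\pi$ --- is exactly the paper's argument (the paper uses the local model $zw$ and the charts $\varphi^{\pm}=\varphi\circ\pi|_{U^{\pm}}$, one orientation preserving and one reversing; your $z_1^2+z_2^2$ model is equivalent). Where you genuinely diverge is the fiber identification: you compute $w_1(TX)|_{f^{-1}(q)}=w_1(N_g)$ via triviality of the normal bundle to show the restricted cover is classified by the orientation class of $N_g$, and then get the genus from $\chi=2(2-g)$ together with connectedness. The paper instead observes directly that $\Xi\circ\pi|_{(f\circ\pi)^{-1}(q)}$ is an orientation double covering of $N_g$ and invokes uniqueness of such covers (its Lemma 2.5) to identify the fiber with the explicit antipodal double cover $\widetilde J:\Sigma_{g-1}\to N_g$; the Euler-characteristic count you use appears in the paper only as an alternative in a remark. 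Your route is more self-contained on this point (the paper's ``Note that \dots is an orientation double covering map'' is precisely what your $w_1$ computation justifies), while the paper's route has the advantage of producing the concrete diffeomorphism $\widetilde\Xi$ intertwining $\pi$ with $\widetilde J$, which is reused later for vanishing cycles and monodromies. The only omission is the routine check of (LF1), $\partial\widetilde X=\pi^{-1}(\partial X)=(f\circ\pi)^{-1}(\partial\Sigma)$, which is immediate since $\pi$ is a covering.
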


\noindent
Here, we will prepare the standard orientation double covering map in Example \ref{examp2.3}.\par
As mentioned above, it was shown in \cite{Kas} and \cite{Matsumoto} that two orientable Lefschtz fibrations are isomorphic if and only if their monodromy factorizations are Hurwitz equivalent. On the other hand, we prove the non-orientable analogue of this result via orientable Lefschetz fibrations, and this statement is claimed as follows, which is our main theorem in this paper.

\begin{thm}\label{thm1.2}
\textit{
For \(g \geqq 3\), two non-orientable Lefschetz fibrations of genus-\(g\) over either \(\mathbb{D}^2\) or \(\mathbb{S}^2\) are isomorphic if and only if their positive monodromy factorizations change each other by the following transformations:}
\begin{itemize}
\item[(i-1)]
\begin{gather*}
{t_{c_1;\theta_{c_1}}} \cdots {t_{c_i;\theta_{c_i}}} {t_{c_{i+1};\theta_{c_{i+1}}}} {t_{c_{i+2};\theta_{c_{i+2}}}} \cdots {t_{c_n;\theta_{c_{n}}}} \\
\leftrightarrow {t_{c_1;\theta_{c_1}}} \cdots {t_{c_i;\theta_{c_i}}} {t_{c_{i+1};\theta_{c_{i+1}}}} ({t_{c_i;\theta_{c_i}}}^{-1} {t_{c_i;\theta_{c_i}}}) {t_{c_{i+2};\theta_{c_{i+2}}}} \cdots {t_{c_n;\theta_{c_{n}}}};
\end{gather*}
\item[(i-2)]
\begin{gather*}
{t_{c_1;\theta_{c_1}}} \cdots {t_{c_{i-1};\theta_{c_{i-1}}}} {t_{c_{i};\theta_{c_{i}}}} {t_{c_{i+1};\theta_{c_{i+1}}}} \cdots {t_{c_n;\theta_{c_{n}}}} \\
\leftrightarrow {t_{c_1;\theta_{c_1}}} \cdots {t_{c_{i-1};\theta_{c_{i-1}}}} ({t_{c_{i+1};\theta_{c_{i+1}}}} {t_{c_{i+1};\theta_{c_{i+1}}}}^{-1}) {t_{c_{i};\theta_{c_{i}}}} {t_{c_{i+1};\theta_{c_{i+1}}}} \cdots {t_{c_n;\theta_{c_{n}}}};
\end{gather*}
\item[(ii)]
\[
{t_{c_1;\theta_{c_1}}} \cdots {t_{c_n;\theta_{c_n}}} \leftrightarrow {t_{\omega(c_1);\omega_*(\theta_{c_1})}} \cdots {t_{\omega(c_n);\omega_*(\theta_{c_n})}},
\]
\textit{for an auto-diffeomorphism} \(\omega:N_g \to N_g\) of the non-orientable, closed, connected, genus-\(g\) smooth surface \(N_g\).
\end{itemize}
\end{thm}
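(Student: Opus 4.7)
The plan is to reduce to the (achiral analogue of the) orientable Kas--Matsumoto theorem via Proposition~\ref{prop1.1}, and then translate Hurwitz equivalence of the lifted factorizations into the moves (i-1)--(ii) on the non-orientable base.

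For the $(\Leftarrow)$ direction I would verify each transformation separately. Transformation (ii) is a global relabeling: the self-diffeomorphism $\omega$ of $N_g$ conjugates the monodromy representation and thus yields an isomorphism of Lefschetz fibrations that is the identity on $\Sigma$ and equals $\omega$ on each regular fiber. For (i-1) and (i-2), the inserted pair $t_{c_i;\theta_{c_i}}^{-1} t_{c_i;\theta_{c_i}}$ represents two Lefschetz critical points with the same vanishing cycle but opposite local orientations; a handle-cancellation in the Kirby diagram induced by the factorization shows that this pair can be added or removed without altering the total space or the fibration structure away from the added points. The specific position of the insertion in (i-1) (after $t_{c_{i+1};\theta_{c_{i+1}}}$) and in (i-2) (before $t_{c_{i};\theta_{c_{i}}}$) reflects which handle may be slid over its partner to realize the cancellation.

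For the $(\Rightarrow)$ direction, suppose $f_{1},f_{2}$ are isomorphic, witnessed by diffeomorphisms $\Phi:X_{1}\to X_{2}$ and $\varphi:\Sigma\to\Sigma$. Functoriality of the orientation double cover lifts $\Phi$ uniquely to an orientation-preserving, deck-equivariant diffeomorphism $\widetilde{\Phi}:\widetilde{X_{1}}\to\widetilde{X_{2}}$, which combined with Proposition~\ref{prop1.1} produces an isomorphism of achiral Lefschetz fibrations $f_{1}\circ\pi_{1}\cong f_{2}\circ\pi_{2}$. Invoking the achiral analogue of Theorem~\ref{thm4.5} (the same handle-diagram argument used in the positive case works once both positive and negative critical points are permitted), the two lifted monodromy factorizations are related by a sequence of elementary Hurwitz moves together with a single global simultaneous conjugation.

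The main obstacle is the descent step. Using the explicit form of the lift computed in the proof of Proposition~\ref{prop1.1}, each non-orientable Dehn twist corresponds to a prescribed short product of twists on the double cover, so a Hurwitz move on the lifted factorization falls into one of two types: a swap between two lifted factors coming from the same non-orientable twist, which amounts to a trivial relabeling at the base level, or a swap between factors originating from two different non-orientable twists. The latter is the crucial case: it is to be realized by first inserting an appropriate cancelling pair via (i-1) or (i-2) to create the required local adjacency at the base, then performing the swap (which becomes expressible in the base factorization), and finally removing the auxiliary pair. A deck-equivariant global conjugation on the cover descends to an auto-diffeomorphism of $N_{g}$, giving (ii). The hypothesis $g\geq 3$ enters through the rigidity of the lift $\widetilde{\Phi}$ and through properties of $\mathrm{MCG}(N_{g})$ needed to realize the intermediate conjugations that appear in this combinatorial translation.
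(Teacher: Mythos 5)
There are two genuine gaps, one in each direction of your argument.

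In your $(\Leftarrow)$ direction you read (i-1) and (i-2) as the insertion or deletion of a cancelling pair of critical points and appeal to a handle cancellation to conclude the total space is unchanged. That claim is false: adding two Lefschetz critical points, even with the same vanishing cycle and trivial net monodromy, attaches two $2$-handles and raises the Euler characteristic of the total space by $2$ (compare equation \eqref{(2.2)}), so the resulting fibration cannot be isomorphic to the original. The moves (i-1) and (i-2) are not stabilizations but \emph{elementary transformations}: the word ${t_{c_i;\theta_{c_i}}}{t_{c_{i+1};\theta_{c_{i+1}}}}({t_{c_i;\theta_{c_i}}}^{-1}{t_{c_i;\theta_{c_i}}})$ is to be regrouped as $({t_{c_i;\theta_{c_i}}}{t_{c_{i+1};\theta_{c_{i+1}}}}{t_{c_i;\theta_{c_i}}}^{-1})\cdot {t_{c_i;\theta_{c_i}}}$, whose first block is a single Dehn twist about the image of $c_{i+1}$ under a power of $T_{c_i}$ by the relation \eqref{(4.2)}; the number of factors is preserved. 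The paper's proof of this implication is also structurally different from yours: it lifts both factorizations via Corollary \ref{cor4.3}, checks by an explicit chain of moves (HE1-1), (HE1-2) that the lifts are Hurwitz equivalent in $\mathcal{M}(\Sigma_{g-1})$, applies Theorem \ref{thm4.5} to get an orientation-preserving isomorphism $\widetilde{H}$ of the achiral fibrations upstairs, and then descends $\widetilde{H}$ to a diffeomorphism $H:X_1\to X_2$ using the deck involution. Descending an \emph{isomorphism} in this way is easy, and some such step is still needed even after the moves are read correctly.

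In your $(\Rightarrow)$ direction the descent step, which you yourself flag as the main obstacle, is not closed by your sketch. The Hurwitz equivalence supplied by the achiral version of Theorem \ref{thm4.5} between the lifted factorizations is a priori an arbitrary sequence of moves in $\mathcal{M}(\Sigma_{g-1})$: the intermediate factorizations need not retain the paired form $t_{\gamma_1}t_{\overline{\gamma_1}}^{-1}\cdots t_{\gamma_n}t_{\overline{\gamma_n}}^{-1}$, i.e.\ need not lie in the image of $\eta$, and the simultaneous conjugation (HE2) is by an arbitrary orientation-preserving diffeomorphism of $\Sigma_{g-1}$, which need not commute with $J$ and hence need not descend to an auto-diffeomorphism of $N_g$ as your move (ii) requires. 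Inserting auxiliary cancelling pairs to create adjacency does not address either point; you would have to prove that the entire Hurwitz equivalence upstairs can be chosen $J$-equivariantly, which is a substantial unproved claim. The paper avoids this difficulty altogether: it establishes the ``only if'' implication directly on $N_g$ by the same Hurwitz-system argument as in \cite{Kas} and \cite{Matsumoto}, without passing through the orientation double cover.
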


\noindent
Here, for a two-sided simple closed curve \(c\) on \(N_g\), \({t_{c;\theta_{c}}}\) denotes the right handed Dehn twist about \(c\) considering an orientation \(\theta_c\) of a tubular neighborhood of \(c\). Further details are provided in Subsection \ref{subsec.4.1}. \par
The organization of this article is as follows. In Section \ref{sec.2}, we recall the definition of orientation double covering map (Definition \ref{def2.6}) and their some properties (Lemma \ref{lem2.4} and \ref{lem2.5}). Furthermore, we also review the definition of Lefschetz fibrations (Defintion \ref{def2.6}), and then we show Proposition \ref{prop1.1}. In Section \ref{sec.3}, we identify vanishing cycles of the achiral Lefschetz fibration in Proposition \ref{prop1.1} (Proposition \ref{prop3.1}). In Section \ref{sec.4}, we give the the proof of Theorem \ref{thm1.2}. For this, we introduce an injective homomorphism between the mapping class group of the genus-\(g\) non-orientable surface and the mapping class group of the genus-\((g-1)\) orientable one. This homomorphism associates the monodromy representation of non-orientable Lefschetz fibrations with that of orientable ones (Proposition \ref{prop4.2}), and we use this to lift a monodromy factorization of a non-orientable Lefschetz fibration to that of an orientable one (Corollary \ref{cor4.3}). Applying these results, we finally prove Theorem \ref{thm1.2}.\par
\noindent
\textbf{Acknowledgements:}
The author would like to express his gratitude to Professor Naoyuki Monden for meaningful discussion with him and his much encouragement. He would also like to thank Professors Masatoshi Sato and Kenta Hayano since Proposition \ref{prop1.1} in this paper is inspired by their useful comments.\par
\noindent
\textbf{Notations:}
Throughout this paper, \(\Sigma_g\) (resp. \(N_g\)) denotes the orientable (resp. non-orientable), closed, connected, genus-\(g\) smooth surface. We write the unit disk and the unit sphere as \(\mathbb{D}^2\) and \(\mathbb{S}^2\), respectively. Moreover, let \(\Sigma_g^b\) denote the orientable, compact, connected, genus-\(g\) smooth surface with \(b\) boundary components. We will regard \(N_{2g+1}\) as the surface which is obtained by removing an open disk from \(\Sigma_g\) and gluing a M\(\mathrm{\ddot{o}}\)bius band along the boundary of the resulting surface. Similarly, we will regard \(N_{2g+2}\) as the surface which is obtained by removing two open disks from \(\Sigma_g\) and gluing two M\(\mathrm{\ddot{o}}\)bius bands along the boundaries of the resulting surface. Then each of these M\(\mathrm{\ddot{o}}\)bius bands is drawn on \(N_g\) as \(\otimes\) and is said to be a \textit{cross cap} (see Figure \ref{Fig.1}). 
A simple closed curve on \(N_g\) is called \textit{two-sided} if its tubular neighborhood is not diffeomorphic to a M\(\mathrm{\ddot{o}}\)bius band but diffeomorphic to an annulus. A two-sided simple closed curve on \(N_g\) is drawn as a curve which passes through some cross caps an even number of times (see Figure \ref{Fig.1}).   

\begin{figure}[H]
\centering
\includegraphics[width=100mm]{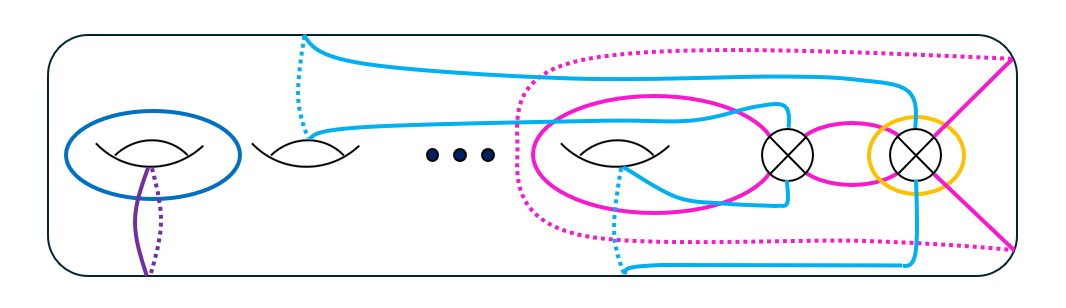}
\caption{Cross caps and two-sided simple closed curves on \(N_{2g+2}\).}
\label{Fig.1}
\end{figure}

\section{Orientation double covering maps and Lefschetz fibrations.}\label{sec.2}

\subsection{Orientation double covering maps.}\label{subsec.2.1}

In this subsection, we recall some properties of orientation double covering maps. More details can be found in \cite{Lima} for example. 

\begin{dfn}\label{def2.1}
Let \(M\) and \(N\) be smooth manifolds. A smooth map \(\Theta:M \to N\) is said to be \textit{orientation double covering} if the following conditions are satisfied:
\begin{itemize}
\item[(1)] 
\(N\) is connected, \(M\) is orientable, and \(\Theta\) is a local diffeomorphism;
\item[(2)]
For each \(q \in N\),the inverse image \(\Theta^{-1}(q)\) is a finite set consisting of two points;
\item[(3)]
Fix \(q \in N\), and set \( \{p_1,p_2\}:=\Theta^{-1}(q) \). Then the linear isomorphism
\begin{gather*}
{d(\Theta|_{(\Theta|_{U_{p_2}})^{-1}(\Theta(U_{p_1}) \cap \Theta({U_{p_2}}))})_{p_2}}^{-1} \circ d(\Theta|_{(\Theta|_{U_{p_1}})^{-1}(\Theta(U_{p_1}) \cap \Theta({U_{p_2}}))})_{p_1} \\
:T_{p_1}M \to T_{p_{2}}M
\end{gather*}
is orientation reversing, where for \(i=1,2\), \(U_{p_i}\) is an open neighborhood of \(p_i\) giving a local diffeomorphism at \(p_i\).
\end{itemize}
\end{dfn}

\begin{examp}\label{examp2.2}
Consider the map \(J:\Sigma_{g-1} \to \Sigma_{g-1}\) defined by mapping each point to its antipodal point. Since the relation \(p \sim J(p)\) generates an equivalence reltion on \(\Sigma_{g-1}\), we obtain the quotient map \(\widetilde{J}:\Sigma_{g-1} \to \Sigma_{g-1}/\sim \thickspace =N_g\). \(\widetilde{J}\) is a double covering map, especially an orientation double covering map.
\end{examp}

\begin{examp}\label{examp2.3}
Let \(M\) be an \(m\)-dimensional smooth manifold with a smooth atlas \(\{(U_{\lambda},\varphi_{\lambda})\}_{\lambda \in \Lambda}\) and \(\mathrm{ob}(T_pM)\) the set of all ordered bases for the tangent space \(T_pM\) at \(p \in M\). We say that two ordered bases are equivalent if the change-of-basis matrix has positive determinant. This gives an equivalence relation ‘\(\sim\)’ on \(\mathrm{ob}(T_pM)\), and hence, the quotient set \(\mathrm{ob}(T_pM)/\sim\) of \(\mathrm{ob}(T_pM)\) is defined. The set \(\mathrm{ob}(T_pM)/\sim\) consists of exactly two elements, which are called local orientations of \(M\) around \(p\). Now, let \(\widetilde{M}\) be the set
\[
\widetilde{M}:=\{(p,o_{p}) \thickspace | \thickspace p \in M, o_{p} \in (\mathrm{ob}(T_pM)/\sim) \},
\]
and define a map \(\pi:\widetilde{M} \to M\) by
\[
\pi(p,o_p):=p
\]
for any \((p,o_p) \in \widetilde{M}\). Moreover, for each \(\lambda \in \Lambda\), we define subsets \(U_{\lambda}^+, U_{\lambda}^-\) of \(\widetilde{M}\) and a map \({\varphi_{\lambda}}^\dag:{U_{\lambda}}^{\dag} \to \mathbb{R}^m \thickspace (\dag \in \{+,-\})\) as follows:
\[
U_{\lambda}^+:=\Big\{(p,o_p) \in \widetilde{M} \thickspace | \thickspace p \in U_{\lambda}, o_p=\Big[\Big( \frac{\partial}{\partial x_1^\lambda}\Big|_p, \cdots , \frac{\partial}{\partial x_m^\lambda}\Big|_p \Big) \Big] \Big\},
\]
\[
U_{\lambda}^-:=\Big\{(p,o_p) \in \widetilde{M} \thickspace | \thickspace p \in U_{\lambda}, o_p=-\Big[\Big( \frac{\partial}{\partial x_1^\lambda}\Big|_p, \cdots , \frac{\partial}{\partial x_m^\lambda}\Big|_p \Big) \Big] \Big\},
\]
\[
{\varphi_{\lambda}}^\dag := \varphi_{\lambda} \circ \pi|_{U_{\lambda}^{\dag}},
\]
\noindent
where \(x_1^\lambda, \cdots ,x_m^\lambda\) is the local coordinate on \(U_{\lambda}\), and \(\Big[\Big( \frac{\partial}{\partial x_1^\lambda}\Big|_p, \cdots , \frac{\partial}{\partial x_m^\lambda}\Big|_p \Big) \Big] \in \mathrm{ob}(T_pM)/\sim\) is the equivalence class of the ordered basis \(\Big( \frac{\partial}{\partial x_1^\lambda}\Big|_p, \cdots , \frac{\partial}{\partial x_m^\lambda}\Big|_p \Big)\) for \(T_pM\). Then we see that \(\pi\) is a double covering map, especially an orientation double covering map and \(\{({U_{\lambda}}^{+},{\varphi_{\lambda}}^{+}),({U_{\lambda}}^{-},{\varphi_{\lambda}}^{-})\}_{\lambda \in \Lambda}\) provides a smooth atlas of \(\widetilde{M}\). From now on, to simplify notation, let \(+o_p\) (resp. \(-o_p\)) denote \(\Big[\Big( \frac{\partial}{\partial x_1^\lambda}\Big|_p, \cdots , \frac{\partial}{\partial x_m^\lambda}\Big|_p \Big) \Big]\) (resp. \(-\Big[\Big( \frac{\partial}{\partial x_1^\lambda}\Big|_p, \cdots , \frac{\partial}{\partial x_m^\lambda}\Big|_p \Big) \Big]\)). We remark that orientations of \(T_{(p,+o_p)} \widetilde{M}\) and \(T_{(p,-o_p)} \widetilde{M}\) are exactly specified by 
\[\Big[\Big( (d\pi_{(p,+o_p)})^{-1}\Big(\frac{\partial}{\partial x_1^\lambda}\Big|_p\Big), \cdots, (d\pi_{(p,+o_p)})^{-1}\Big(\frac{\partial}{\partial x_m^\lambda}\Big|_p\Big) \Big) \Big]\]
and
\[-\Big[\Big( (d\pi_{(p,-o_p)})^{-1}\Big(\frac{\partial}{\partial x_1^\lambda}\Big|_p\Big), \cdots , (d\pi_{(p,-o_p)})^{-1}\Big(\frac{\partial}{\partial x_m^\lambda}\Big|_p\Big) \Big) \Big],\]
which implies that \(\varphi_{\lambda}^+\) and \(\varphi_{\lambda}^-\) are orientation preserving and orientation reversing, respectively. Throughout this paper, we will call the orientation double covering map \(\pi\) \textit{standard}. 
\end{examp}

Orientation double covering maps have the following properties.

\begin{lem}\label{lem2.4}
\begin{itemize}
\item[(1)] 
\textit{Every connected manifold has an orientation double covering map.}
\item[(2)] 
\textit{Let \(\Theta:M \to N\) be an orientation double covering map. Then \(M\) is connected if and only if \(N\) is non-orientable.}
\end{itemize}
\end{lem}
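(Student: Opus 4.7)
The plan is, for part (1), to use the explicit construction already set up in Example \ref{examp2.3}. Given a connected smooth manifold $M$, the set $\widetilde{M}$ of pairs $(p,o_p)$ together with the projection $\pi:\widetilde{M}\to M$ and the atlas $\{(U_\lambda^+,\varphi_\lambda^+),(U_\lambda^-,\varphi_\lambda^-)\}_{\lambda\in\Lambda}$ is a smooth orientable manifold on which $\pi$ is a local diffeomorphism. The three conditions of Definition \ref{def2.1} are then verified one by one: orientability of $\widetilde{M}$ is recorded in Example \ref{examp2.3}; the two-point-fiber condition follows from $|\mathrm{ob}(T_pM)/{\sim}|=2$; and condition (3) reduces to the statement that the natural involution $\iota:(p,o_p)\mapsto(p,-o_p)$ is orientation-reversing, which is immediate from the sign convention specifying orientations on $T_{(p,+o_p)}\widetilde{M}$ and $T_{(p,-o_p)}\widetilde{M}$ in Example \ref{examp2.3}.

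For part (2), I would prove both implications by contrapositive. For the direction ``$M$ connected $\Rightarrow N$ non-orientable'', suppose instead that $N$ is orientable and fix an orientation on $N$. Then split
\[
M^+:=\{p\in M \mid d\Theta_p \text{ is orientation-preserving}\}, \qquad M^-:=M\setminus M^+.
\]
Both sets are open (because $\Theta$ is a local diffeomorphism and the orientation-preserving condition is open), disjoint, and non-empty: for any $q\in N$, condition (3) of Definition \ref{def2.1} guarantees that the two preimages $p_1,p_2$ of $q$ lie in different sets. Hence $M=M^+\sqcup M^-$ is disconnected.

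For the converse ``$N$ non-orientable $\Rightarrow M$ connected'', suppose for contradiction that $M=M_1\sqcup M_2$ is a disconnection with both $M_i$ non-empty. Since $\Theta$ is a surjective local diffeomorphism with two-point fibers and $N$ is connected, each image $\Theta(M_i)$ is open and (because its complement is the other open image together with nothing else) also closed, so $\Theta(M_i)=N$. The fiber distribution $|\Theta^{-1}(q)\cap M_1|+|\Theta^{-1}(q)\cap M_2|=2$ with both summands positive then forces $|\Theta^{-1}(q)\cap M_i|=1$ for each $i$. Thus $\Theta|_{M_i}:M_i\to N$ is a bijective local diffeomorphism, hence a diffeomorphism. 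Since $M$ is orientable by condition (1), each $M_i$ inherits an orientation which we push forward to $N$ via $\Theta|_{M_i}$, contradicting non-orientability of $N$.

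The main obstacle is the bookkeeping in this last paragraph: one has to argue carefully that when the orientable double cover splits, each component maps diffeomorphically onto the base, which uses both the surjectivity of each restriction (from connectedness of $N$) and the one-to-one splitting of the two-point fibers across components. Everything else amounts to unpacking the definitions in Example \ref{examp2.3} and Definition \ref{def2.1}.
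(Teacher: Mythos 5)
The paper does not actually prove Lemma~\ref{lem2.4}; it records it as a standard property of orientation double coverings and defers to \cite{Lima}, so there is no in-paper argument to compare against. Your proof is the standard one and is essentially correct: part (1) is exactly the construction of Example~\ref{examp2.3}, with condition (3) of Definition~\ref{def2.1} reducing to the sheet-swap involution \((p,o_p)\mapsto(p,-o_p)\) being orientation-reversing, and the first direction of part (2) via the open partition \(M=M^+\sqcup M^-\) is clean, with condition (3) correctly supplying that the two points of each fiber land in different pieces.

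The one step that needs repair is in the converse direction of part (2): your parenthetical justification that \(\Theta(M_1)\) is closed ``because its complement is the other open image together with nothing else'' is not right as stated, since \(N\setminus\Theta(M_1)\) is merely \emph{contained} in \(\Theta(M_2)\) — the two images could a priori overlap, and an open superset of the complement does not make a set closed. The correct argument is to observe that the function \(q\mapsto|\Theta^{-1}(q)\cap M_1|\) is locally constant: since \(\Theta\) is a local diffeomorphism whose fibers all have exactly two points, each \(q\) has a neighborhood \(V\) with \(\Theta^{-1}(V)\) a disjoint union of two open sets mapped diffeomorphically onto \(V\) (take disjoint charts around the two preimages; the two-point fiber condition forces these to exhaust \(\Theta^{-1}(V)\)), and \(M_1\) is clopen, so the count cannot jump. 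Connectedness of \(N\) then forces this count to be constantly \(0\), \(1\), or \(2\); the extremes are ruled out by \(M_i\neq\emptyset\), giving \(|\Theta^{-1}(q)\cap M_i|=1\) for all \(q\), after which your conclusion that \(\Theta|_{M_i}\) is a diffeomorphism and pushes an orientation onto \(N\) goes through. This is a local fix, not a flaw in the overall strategy.
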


\noindent
In particular, for various discussions in this paper, we need the next lemma. 

\begin{lem}\label{lem2.5}
\textit{
For \(i=1,2\), let \(\Theta_i:M_i \to N\) be an orientation double covering map. Then there is a unique orientation preserving diffeomorphism \(\kappa:M_1 \to M_2\) such that \(\Theta_1 = \kappa \circ \Theta_2\).
}
\end{lem}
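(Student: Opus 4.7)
The plan is to exploit the intrinsic description of the orientation double cover from Example \ref{examp2.3}: points above $q \in N$ correspond to the two local orientations of $N$ at $q$. Each $p_i \in \Theta_i^{-1}(q)$ canonically specifies such a local orientation, namely the pushforward by the local diffeomorphism $\Theta_i$ of the fixed global orientation on $M_i$. I would use this rigid labelling to identify $M_1$ with $M_2$ pointwise over $N$.

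Concretely, for $p \in M_1$ I would define $\kappa(p)$ to be the unique $p' \in \Theta_2^{-1}(\Theta_1(p))$ for which the composite
\[
(d(\Theta_2|_{U'})_{p'})^{-1} \circ d(\Theta_1|_U)_p : T_p M_1 \to T_{p'} M_2
\]
is orientation preserving, where $U \ni p$ and $U' \ni p'$ are neighborhoods on which $\Theta_1$ and $\Theta_2$ restrict to diffeomorphisms onto a common open set of $N$. Condition (3) of Definition \ref{def2.1} applied to $\Theta_2$ tells us that the two candidate lifts in $\Theta_2^{-1}(\Theta_1(p))$ induce opposite orientations on $T_pM_1 \to T_{\bullet}M_2$, so exactly one of them works and $\kappa$ is well defined. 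Writing $\kappa$ locally as $(\Theta_2|_{U'})^{-1} \circ \Theta_1|_U$ makes smoothness, the covering relation $\Theta_2 \circ \kappa = \Theta_1$, and the orientation preserving property transparent, and swapping the roles of $M_1, M_2$ in the same construction produces a two-sided inverse, so $\kappa$ is in fact an orientation preserving diffeomorphism.

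For uniqueness, any other orientation preserving diffeomorphism $\kappa' : M_1 \to M_2$ satisfying the same covering relation yields an orientation preserving deck transformation $\kappa' \circ \kappa^{-1}$ of $\Theta_2$. The only deck transformations of a double cover are the identity and the nontrivial sheet swap, and Definition \ref{def2.1}(3) forces the latter to be orientation reversing; hence $\kappa' \circ \kappa^{-1} = \mathrm{id}$, giving $\kappa' = \kappa$. The main obstacle I expect is the orientation bookkeeping needed to invoke Definition \ref{def2.1}(3) twice, first to verify that the pointwise construction of $\kappa$ picks out a unique sheet, and then to rule out a nontrivial orientation preserving deck transformation; once these are in place, everything else reduces to the standard local-diffeomorphism calculation.
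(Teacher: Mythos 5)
The paper does not actually prove Lemma \ref{lem2.5}: it is stated as a standard property of orientation double covers, with the reader deferred to \cite{Lima}, so there is no in-text argument to compare yours against. Your proof is correct and complete on its own terms: the fibrewise selection of the sheet $p'$ making $(d(\Theta_2|_{U'})_{p'})^{-1}\circ d(\Theta_1|_U)_p$ orientation preserving is well defined precisely because Definition \ref{def2.1}(3) makes the two candidate comparisons differ by an orientation reversing transition, the local formula $(\Theta_2|_{U'})^{-1}\circ\Theta_1|_U$ (valid since the chosen sheet is locally constant) gives smoothness and the covering relation, and the uniqueness step correctly combines the fact that $\mathrm{Aut}(\Theta_2)$ has only two elements (a fact the paper itself records immediately after the lemma) with the observation that the differential of the sheet swap is exactly the map in Definition \ref{def2.1}(3), hence orientation reversing. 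Two small points worth flagging: the lemma's equation $\Theta_1=\kappa\circ\Theta_2$ is a typo for $\Theta_1=\Theta_2\circ\kappa$, which is the reading you adopt; and the phrase ``orientation preserving'' presupposes fixed orientations on $M_1$ and $M_2$, which you make explicit by working with ``the fixed global orientation on $M_i$'' --- this is consistent with how the paper uses the lemma.
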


It is well-known that the covering transformation group of an orientation double covering map \(\Theta:M \to N\) denoted by \(\mathrm{Aut}(\Theta)\) consists of exactly two maps, which are the identity map and the orientation reversing involution.

\subsection{Lefschtez fibrations.}
From now on, for a smooth manifold \(Q\), \(\mathrm{Int}(Q)\) and \(\partial Q\) denote the interior and the boundary of \(Q\), respectively. In addition, for a smooth map \(F\) between two manifolds, we write the set of all critical points of \(F\) as \(\mathrm{Crit}(F)\) and the image \(F(\mathrm{Crit}(F))\) as \(\Delta_F\). 

\begin{dfn}\label{def2.6}
Let \(X\) be a compact, connected, smooth 4-manifold and \(\Sigma\) a compact, connected, orientable smooth 2-manifold. A smooth map \(f:X \to \Sigma\) is called a \textit{Lefschetz fibration} if it satisfies the following conditions: 
\begin{itemize}
\item[(LF1)] 
\(\partial X=f^{-1}(\partial \Sigma)\);
\item[(LF2)]
\(\mathrm{Crit}(f) \subset \mathrm{Int}(X)\);
\item[(LF3)]
for each \(x \in \mathrm{Crit}(f)\), there are complex coordinate charts \((U,\varphi)\) and \((V,\psi)\) about \(x\) and \(f(x)\), respectively, such that 
\[
(\psi \circ f \circ \varphi^{-1})(z,w)=zw
\]
for all \((z,w) \in \varphi(U) \subset \mathbb{C}^2\).
\end{itemize}
We call \(X\) and \(\Sigma\) the \textit{total space} and the \textit{based space} of a Lefschetz fibration \(f:X \to \Sigma\), respectively.
\end{dfn}

If the total space \(X\) of a Lefschetz fibration \(f:X \to \Sigma\) is orientable, then \(f\) is said to be \textit{orientable}. Furthermore, an orientable Lefschetz fibration is called \textit{achiral} if there exists a chart \((U,\varphi)\) about a critical point of \(f\) satisfying the condition (LF3) such that its local coordinate system \(\varphi\) is orientation reversing. We call a critical point \(x \in \mathrm{Crit}(f)\) \textit{positive} (resp. \textit{negative}) if the local coordinate system of a chart about it with the condition (LF3) is orientation preserving (resp. orientation reversing). For each \(y \in \Sigma - \Delta_f\), the inverse image \(f^{-1}(y)\) of \(y\), which is called a \textit{regular fiber}, is diffeomorphic to \(\Sigma_g \thickspace (g \geqq 0\)), and we then say that \(f\) has the genus \(g\). A \textit{singular fiber} of \(f\) is the inverse image of a critical value of \(f\). 
\par
On the other hand, if the total space \(X\) of a Lefschetz fibration \(f:X \to \Sigma\) is non-orientable, then \(f\) is said to be \textit{non-orientable}. In this case, a regular fiber of \(f\) is diffeomorphic to \(N_g \thickspace (g \geqq 1\)).

\begin{rem}\label{rem2.7}
Let \(f:X \to \Sigma\) be a Lefschetz fibration. Since the condition (LF3) and compactness of X, it follows that \(\mathrm{Crit}(f)\) is a finite set. Moreover, we can assume that \(f|_{\mathrm{Crit}(f)}\) is an injection, i.e., each singular fiber of \(f\) has only one critical point by perturbing \(f\) slightly.
\end{rem}

\subsection{Proof of Proposition 1.1.}\label{subsec.2.3}

Let \(f:X \to \Sigma\) be a genus-\(g\) non-orientable Lefshetz fibration and \(\pi:\widetilde{X} \to X\) the standard orientation double covering map. We now prove Proposition \ref{prop1.1}.

\begin{proof}[Proof of Proposition \ref{prop1.1}.]
We first show that \(f \circ \pi\) satisfies the conditions (LF1) and (LF2). Since \(\pi\) is a local diffeomorphism, we observe, by the excision theorem, that \(H_4(\widetilde{X},\widetilde{X}-\{(x,o_x)\}) \cong H_4(X,X-\{x\})\)  for any \(x \in X\). Thus we have
\begin{equation*}
    \begin{split} 
\partial \widetilde{X} 
&= \{(x,o_x) \in \widetilde{X} \thickspace | \thickspace H_4(\widetilde{X},\widetilde{X}-\{(x,o_x)\})=0\} \\
&= \{(x,o_x) \in \widetilde{X} \thickspace | \thickspace H_4(X,X-\{x\})=0\} \\
&= \pi^{-1}(\partial X) \\
&= (f \circ \pi)^{-1}(\partial \Sigma).
    \end{split}
\end{equation*}
Since \(\pi\) is a smooth covering map, it is in particular a submersion. Hence, we see that
\begin{equation*}
    \begin{split} 
\mathrm{Crit}(f \circ \pi)
&=\{(x,o_x)\in \widetilde{X} \thickspace | \thickspace x \in \mathrm{Crit}(f)\} \\
&=\pi^{-1}(\mathrm{Crit}(f)),
    \end{split}
\end{equation*}
and this gives \(\mathrm{Crit}(f \circ \pi) \subset \mathrm{Int}(\widetilde{X})\).
\par
Next, we show that \(f \circ \pi\) satisfies the condition (LF3). Fix \(x \in \mathrm{Crit}(f)\). Take charts \((U,\varphi)\) and \((V,\phi)\) about \(x\) and \(f(x)\), respectively, such that they satisfy the condition (LF3). From the discussion of Example \ref{examp2.3}, for \((U,\varphi)\), \((U^\dag,\varphi^\dag)\) is a chart about \((x,\dag o_x) \in \widetilde{X}\) for each \(\dag \in \{+,-\}\), and we then get 
\begin{equation*}
    \begin{split} 
(\psi \circ (f \circ \pi) \circ (\varphi^\dag)^{-1})(z,w) 
&=(\psi \circ f \circ \varphi^{-1})(z,w) \\
&=zw
    \end{split}
\end{equation*}
for all \((z,w) \in \varphi^\dag(U^\dag)=\varphi(U) \subset \mathbb{C}^2\). Therefore, combining this with the fact that \(\varphi^+\) and \(\varphi^-\) are orientation preserving and orientation reversing, respectively, it follows that \(f \circ \pi\) is an achiral Lefschetz fibration which has the same number of positive critical points and negative critical points.
\par
Finally, we show that the genus of \(f \circ \pi\) equals to \(g-1\). Fix \(y \in \Sigma - \Delta_f\) and a diffeomorphism \(\Xi:f^{-1}(y) \to N_g\). Note that the composition \(\Xi \circ \pi|_{(f \circ \pi)^{-1}(y)}:(f \circ \pi)^{-1}(y) \to N_g\) is an orientation double covering map. Hence, applying Lemma \ref{lem2.5}, we can obtain a unique orientation preserving diffeomorophism \(\widetilde{\Xi}:(f \circ \pi)^{-1}(y) \to \Sigma_{g-1}\) such that the following diagram commutes:

\begin{equation}\label{diagram(2.1)}
\xymatrix{
(f \circ \pi)^{-1}(y)\ar[r]^-{\widetilde{\Xi}}\ar[d]_-{\pi|_{(f \circ \pi)^{-1}(y)}}\ar@{}[rd]|{ }&\Sigma_{g-1}\ar[d]^-{\widetilde{J}}\\
f^{-1}(y) \ar[r]_-{\Xi}&N_g,
}
\end{equation}

\noindent
where the map \(\widetilde{J}\) is defined in Example \ref{examp2.2}. This yields our desired result.
\par
We complete the proof of Proposition \ref{prop1.1}.
\end{proof}

\begin{rem}
In Proposition \ref{prop1.1}, an alternative proof of the genus of \(f \circ \pi:\widetilde{X} \to \Sigma:=\Sigma_{g'}^b\) is given using the Euler characteristic of a Lefschetz fibration, under the assumption that \(2g'+b \ne 2\). Suppose that \(f\) has \(n\) singular fibers. Then 
\begin{equation}\label{(2.2)}
\chi(X)=\chi(N_g)\chi(\Sigma)+n
\end{equation}
holds, where \(\chi\) indicates the Euler characteristic of a topological space. Note here that
\begin{equation}\label{(2.3)}
\chi(\widetilde{X})=2\chi(X)
\end{equation}
because \(X\) is a smooth manifold, i.e., a CW complex and the degree of the covering map \(\pi:\widetilde{X} \to X\) is two.  Moreover, according to the proof of Proposition \ref{prop1.1} and Remark \ref{rem2.7}, the number of singular fibers of \(f \circ \pi\) is \(2n\). So we have
\begin{equation}\label{(2.4)}
\chi(\widetilde{X})=\chi(\Sigma_k)\chi(\Sigma)+2n,
\end{equation}
here we assume that the genus of \(f \circ \pi\) is \(k\). 
It follows from the equations \eqref{(2.2)}, \eqref{(2.3)}, and \eqref{(2.4)} that \(k=g-1\), and we thus get the assertion.

\end{rem}

\section{Vanishing cycles of Lefschetz fibrations.}\label{sec.3}

The aim of this section is to identify vanishing cycles of the achiral Lefschetz fibration which is constructed by the composition of the standard orientation double covering map and a non-orientable Lefschetz fibration. In order to do this, we first review the construction of vanishing cycles of a Lefschetz fibration.
\par
Define a map \(\bar{f}:\mathbb{C}^2 \to \mathbb{C}\) by \(\bar{f}(z,w):=zw\) for all \((z,w) \in \mathbb{C}^2\). It is easy to see that the set
\begin{equation}\label{(3.1)}
\bar{f}^{-1}(\varepsilon) \cap \{(z,w) \in \mathbb{C}^2 \thickspace | \thickspace |z|=|w|\}
\end{equation}
is diffeomorphic to the circle with center \(0 \in \mathbb{C}\) and radius \(\sqrt{|\varepsilon|}\) if  \(\varepsilon \ne 0\) and equals to the set consisting of the single point \(0 \in \mathbb{C}^2\) if \(\varepsilon=0\). Let \(f:X \to \Sigma\) be a Lefschetz fibration. Fix \(y_0 \in \Delta_f\) and \(y_1 \in \Sigma - \Delta_f\), and let \(x_0 \in \mathrm{Crit}(f) \cap f^{-1}(y_0)\). We can take a smooth embedding \(\ell:\lbrack 0,1 \rbrack \to \Sigma\) such that \(y_0=\ell(0)\) and \(y_t:=\ell(t) \in \Sigma-\Delta_f\) \((t \in (0,1\rbrack)\). In addition, choose complex coordinate charts \((U,\varphi)\) and \((V,\psi)\) about \(x_0\) and \(y_0\), respectively, such that they satisfy the condition (LF3). Now, for a sufficiently small fixed \(\delta \in (0,1 \rbrack\), we define a simple closed curve as
\[
C_{\psi(y_\delta)}:=(\psi \circ f \circ \varphi^{-1})^{-1}(\psi(y_\delta)) \cap \{(z,w) \in \mathbb{C}^2 \thickspace | \thickspace |z|=|w|\}
\]
based on the set \eqref{(3.1)}, and let \(C\) denote the image of \(C_{\psi(y_\delta)}\) under the diffeomorphism \(\varphi^{-1}:\varphi(U) \to U\). Take a vector field \(W:f^{-1}(\ell(\lbrack \delta,1 \rbrack)) \to T(f^{-1}(\ell(\lbrack \delta,1 \rbrack)))\) on the submanifold \(f^{-1}(\ell(\lbrack \delta,1 \rbrack))\) of \(X\) which satisfies
\[
d(f|_{f^{-1}(\ell(\lbrack \delta,1 \rbrack))})_x(W(x))=\dfrac{d\ell|_{\lbrack \delta,1 \rbrack}}{dt}(t)
\]
for any \(x \in f^{-1}(y_t)\), where \(T(f^{-1}(\ell(\lbrack \delta,1 \rbrack)))\) represents the tangent bundle of \(f^{-1}(\ell(\lbrack \delta,1 \rbrack))\). For each \(x \in f^{-1}(y_\delta)\), we then have an integral curve \(c_x\) of \(W\) starting at \(x\), and therefore, we get the diffeomorphism \(\Upsilon:f^{-1}(y_\delta) \to f^{-1}(y_1)\) given by \(\Upsilon(x):=c_x(1-\delta)\) for all \(x \in f^{-1}(y_\delta)\). Furthermore, we obtain, by using the diffeomorphism \(\Upsilon\), the simple closed curve \(\Upsilon(C) \subset f^{-1}(y_1)\). Note that the isotopy class of \(\Upsilon(C)\) is uniquely determined by the homotopy class of \(\ell\) with fixed endpoints. In the context of the discussion above, the isotopy class of \(\Upsilon(C)\) is said to be the \textit{vanishing cycle} of \(f\) determined by \(\ell\).

\begin{prop}\label{prop3.1}
\textit{
Let \(f:X \to \Sigma\) be a genus-\(g\) non-orientable Lefschetz fibration and \(\pi:\widetilde{X} \to X\) the standard orientation double covering map. Then each vanishing cycle of the genus-\((g-1)\) achiral Lefschetz fibration \(f \circ \pi\) appears as a pair of inverse images of a vanishing cycle of \(f\) under \(\pi\). In particular, each pair is drawn as being an antipodal position on \(\Sigma_{g-1}\) as shown in Figure \ref{Fig.2}.
}  
\end{prop}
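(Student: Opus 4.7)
The plan is to follow the construction of vanishing cycles given immediately before the proposition and check, step by step, that every piece of data used for $f$ lifts canonically through $\pi$ to data for $f \circ \pi$. First, I would fix a critical point $x_0 \in \mathrm{Crit}(f)$ with critical value $y_0$, a path $\ell$ from $y_0$ to a regular value $y_1$, and charts $(U,\varphi)$, $(V,\psi)$ as in (LF3). By the proof of Proposition \ref{prop1.1}, the preimage $\pi^{-1}(x_0) = \{(x_0, +o_{x_0}), (x_0, -o_{x_0})\}$ consists of exactly two critical points of $f \circ \pi$ (one positive and one negative), and the charts $(U^{\pm}, \varphi^{\pm})$ constructed in Example \ref{examp2.3} still satisfy $(\psi \circ (f \circ \pi) \circ (\varphi^{\pm})^{-1})(z,w) = zw$ over the same $\psi(V)$. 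Hence the local vanishing circles $C_{\psi(y_\delta)}$ used to define vanishing cycles of $f \circ \pi$ at $(x_0, \pm o_{x_0})$ are literally the same subset of $\mathbb{C}^2$ as the one used for $f$ at $x_0$, and so $(\varphi^{\pm})^{-1}(C_{\psi(y_\delta)}) = \pi|_{U^{\pm}}^{-1}(C)$. In particular, the two local vanishing cycles upstairs together make up $\pi^{-1}(C)$.

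Next I would parallel-transport. Given the vector field $W$ on $f^{-1}(\ell([\delta,1]))$ used in the construction, I would define its canonical lift
\[
\widetilde{W}(x,o_x) := (d\pi_{(x,o_x)})^{-1}\bigl(W(x)\bigr)
\]
on $(f \circ \pi)^{-1}(\ell([\delta,1]))$, which is well defined because $\pi$ is a local diffeomorphism. The vector field $\widetilde{W}$ satisfies $d(f\circ\pi)(\widetilde{W}) = \tfrac{d\ell}{dt}$ by construction, so it plays the role of $W$ for the fibration $f \circ \pi$. Since integral curves of $\widetilde{W}$ project under $\pi$ to integral curves of $W$, the resulting parallel transport diffeomorphism $\widetilde{\Upsilon} : (f\circ\pi)^{-1}(y_\delta) \to (f\circ\pi)^{-1}(y_1)$ fits into the commutative square $\pi \circ \widetilde{\Upsilon} = \Upsilon \circ \pi|_{(f\circ\pi)^{-1}(y_\delta)}$. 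Applying this to the two components of $\pi^{-1}(C)$ that sit above the two critical points yields
\[
\widetilde{\Upsilon}\bigl((\varphi^{\pm})^{-1}(C_{\psi(y_\delta)})\bigr) \;=\; \pi|_{(f\circ\pi)^{-1}(y_1)}^{-1}\bigl(\Upsilon(C)\bigr),
\]
which exhibits the two vanishing cycles of $f \circ \pi$ corresponding to the singular value $y_0$ as exactly the two sheets of $\pi^{-1}$ of the single vanishing cycle $\Upsilon(C)$ of $f$.

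Finally, to get the antipodal picture, I would invoke diagram \eqref{diagram(2.1)} from the proof of Proposition \ref{prop1.1}: fixing an identification $\Xi : f^{-1}(y_1) \to N_g$, Lemma \ref{lem2.5} produces an orientation preserving diffeomorphism $\widetilde{\Xi} : (f\circ\pi)^{-1}(y_1) \to \Sigma_{g-1}$ under which $\pi|_{(f\circ\pi)^{-1}(y_1)}$ is identified with the antipodal quotient $\widetilde{J} : \Sigma_{g-1} \to N_g$ of Example \ref{examp2.2}. Hence the two components computed above are carried to $\widetilde{J}^{-1}(\Xi(\Upsilon(C)))$, which by definition of $\widetilde{J}$ consists of two simple closed curves in antipodal position on $\Sigma_{g-1}$, matching Figure \ref{Fig.2}.

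The step I expect to require the most care is the middle one: setting up $\widetilde{W}$ and verifying that it genuinely lifts the parallel transport used in the definition of $\Upsilon$, rather than just transporting over a smaller neighborhood. Everything else is either local chart bookkeeping already prepared in Examples \ref{examp2.3}–\ref{examp2.2} or a direct appeal to Lemma \ref{lem2.5}; the substantive geometric content is the compatibility of connections under the local diffeomorphism $\pi$, which is what ensures that a single vanishing cycle below really becomes two antipodal vanishing cycles above and not some more complicated configuration.
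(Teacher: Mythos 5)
Your proposal is correct and follows essentially the same route as the paper's proof: lift the local model through the charts of Example \ref{examp2.3}, lift the horizontal vector field $W$ to $\widetilde{W}$ via the local diffeomorphism $\pi$ and use uniqueness of integral curves to see that parallel transport commutes with $\pi$, then pass to the antipodal picture via Lemma \ref{lem2.5} and the diagram \eqref{diagram(2.1)}. The only cosmetic point is that your displayed identity for $\widetilde{\Upsilon}\bigl((\varphi^{\pm})^{-1}(C_{\psi(y_\delta)})\bigr)$ should be read as the union over both signs, since each single component maps onto only one sheet of $\pi^{-1}(\Upsilon(C))$.
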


\begin{figure}[H]
\centering
\includegraphics[width=110mm]{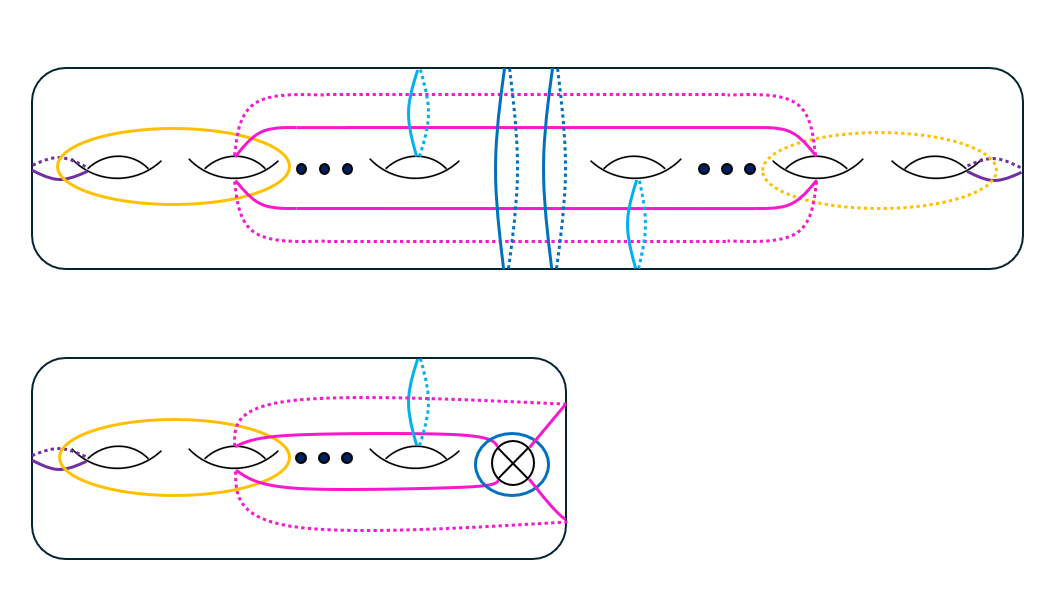}
\caption{Vanishimg cycles of a genus-\(g\) non-orientable Lefschetz fibration \(f\) and the genus-(\(g-1\)) achiral Lefschetz fibration \(f \circ \pi\), where \(g\) is odd.}
\label{Fig.2}
\end{figure}

\begin{proof} 
Let \(f:X \to \Sigma\) be a genus-\(g\) non-orientable Lefschetz fibration, and for this, we will use the same symbols as in the above discussion (for instance, \(x_0,\ell,C,\Upsilon\), etc). It follows from Proposition \ref{prop1.1} and its proof that \(f \circ \pi:\widetilde{X} \to \Sigma\) is an achiral Lefschetz fibration of genus-\((g-1)\) and a chart about \((x_0,\dag o_{x_0}) \in \mathrm{Crit}(f \circ \pi) \cap (f \circ \pi)^{-1}(y_0)\) with the condition (LF3) is given by \((U^{\dag},\varphi^{\dag})\) for each \(\dag \in \{+,-\}\). As above, for each \(\dag \in \{+,-\}\), we can construct the simple closed curve
\[
{C_{\psi(y_\delta)}}^\dag:=(\psi \circ (f \circ \pi) \circ (\varphi^\dag)^{-1})^{-1}(\psi(y_\delta)) \cap \{(z,w) \in \mathbb{C}^2 \thickspace | \thickspace |z|=|w|\}
\]
and put \(C^\dag :=(\varphi^\dag)^{-1}({C_{\psi(y_\delta)}}^\dag)\). Notice that 
\[
C^+ \amalg C^-=\pi^{-1}(C)
\]
holds. Define a map \(\widetilde{W}:(f \circ \pi)^{-1}(\ell(\lbrack \delta,1 \rbrack)) \to T((f \circ \pi)^{-1}(\ell(\lbrack \delta,1 \rbrack)))\) by
\[
d(\pi|_{(f \circ \pi)^{-1}(\ell(\lbrack \delta,1 \rbrack))})_{(x,o_{x})}(\widetilde{W}(x,o_x))=W(x)
\]
for all \((x,o_x) \in (f \circ \pi)^{-1}(y_t)\). Then \(\widetilde{W}\) is a vector field on the submanifold \((f \circ \pi)^{-1}(\ell(\lbrack \delta,1 \rbrack))\) of \(\widetilde{X}\). In addition, for each \((x,o_x) \in (f \circ \pi)^{-1}(y_t)\), the following equality holds:
\[
d(f \circ \pi|_{(f \circ \pi)^{-1}(\ell(\lbrack \delta,1 \rbrack))})_{(x,o_{x})}(\widetilde{W}(x,o_x))=\dfrac{d\ell|_{\lbrack \delta,1 \rbrack}}{dt}(t).
\]
We can define a diffeomorphism \(\widetilde{\Upsilon}:(f \circ \pi)^{-1}(y_{\delta}) \to (f \circ \pi)^{-1}(y_1)\) using an integral curve \(\widetilde{c}_{(x,o_x)}\) of \(\widetilde{W}\) starting at \((x,o_x) \in (f \circ \pi)^{-1}(y_{\delta})\) to be \(\widetilde{\Upsilon}((x,o_x)):=\widetilde{c}_{(x,o_x)}(1-\delta)\) for all \((x,o_x) \in (f \circ \pi)^{-1}(y_{\delta})\). Note that
\[
\pi|_{(f \circ \pi)^{-1}(\ell(\lbrack \delta,1 \rbrack))} \circ \widetilde{c}_{(x,o_x)}=c_x
\]
by the uniqueness of integral curves, and hence, we see that the vanishing cycles \(\widetilde{\Upsilon}(C^+)\) and \(\widetilde{\Upsilon}(C^-)\) of \(f \circ \pi\) satisfy

\begin{equation}\label{(3.2)}
\widetilde{\Upsilon}(C^+) \amalg \widetilde{\Upsilon}(C^-)=\pi^{-1}(\Upsilon(C)). 
\end{equation}

\noindent
Moreover, as mentioned in the proof of Proposition \ref{prop1.1}, for a fixed diffeomorphism \(\Xi:f^{-1}(y_1) \to N_g\), there is exactly one orientation preserving diffomorphism \(\widetilde{\Xi}:(f \circ \pi)^{-1}(y_1) \to \Sigma_{g-1}\) such that \(\Xi \circ \pi|_{(f \circ \pi)^{-1}(y_1)}=\widetilde{J} \circ \widetilde{\Xi}\). It is easy to check that

\begin{equation}\label{(3.3)}
\widetilde{\Xi}(\widetilde{\Upsilon}(C^+) \amalg \widetilde{\Upsilon}(C^-))=\widetilde{J}^{-1}(\Xi(\Upsilon(C))).  
\end{equation}

\noindent
The equations \eqref{(3.2)} and \eqref{(3.3)} mean the desired result.
\end{proof}

\begin{rem}\label{rem3.2}
In the process of proving Proposition \ref{prop3.1}, we also find vanishing cycles of a non-orientable Lefschetz fibration are two-sided. Indeed, since \(\pi|_{U^{\dag}}:U^{\dag} \to U\) is a diffeomorphism for \(\dag \in \{+,-\}\), it follows that \(C=\pi|_{U^{\dag}}(C^{\dag})\) is two-sided, and thus, \(\Upsilon(C)\) is also two-sided.
\end{rem}

\section{Isomorphism classes of non-orientable Lefschetz fibrations and Hurwitz equivalence.}\label{sec.4}

In this section, working from results about orientable Lefschetz fibrations given in \cite{Kas} and \cite{Matsumoto}, we prove Theorem \ref{thm1.2}. For this, we review the mapping class group of a closed surface, monodromy factorizations of Lefschetz fibrations, and Hurwitz equivalence for monodromy factorizations in order. Throughout this section, \(S_g\) denotes either \(\Sigma_g\) or \(N_g\). 

\subsection{Dehn twists, the mapping class group of a closed surface, and lifts.}\label{subsec.4.1}

Let \(c\) be a (two-sided) simple closed curve on \(S_g\) and \(\nu c\) a tubular neighborhood of \(c\).  A \textit{right handed Dehn twist} \(T_c\) about \(c\) is an auto-diffeomorphism of \(S_g\) which is drawn as Figure \ref{Fig.3}.

\begin{figure}[H]
\centering
\includegraphics[width=100mm]{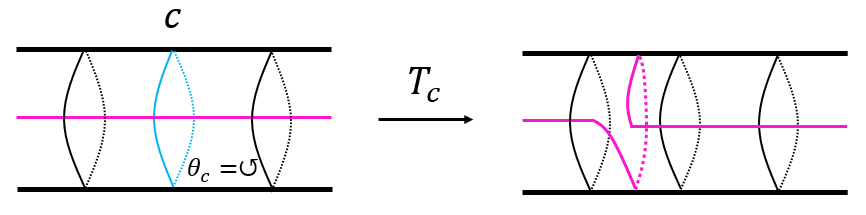}
\caption{A right handed Dehn twist \(T_c\) about \(c\).}
\label{Fig.3}
\end{figure}

\noindent
The isotopy class of \(T_c\) is also called the \textit{right handed Dehn twist} about \(c\), and we write this as \(t_c\). In the case of \(S_g=N_g\), we must choice an orientation of \(\nu c\) when doing a Dehn twist. So, for \(c \subset N_g\), let \(\theta_c\) denote an orientation of \(\nu c\), and then we represent the Dehn twist \(t_c\) as \(t_{c;\theta_c}\). However, note that the notation \(\theta_c\) may be omitted when an orientation of \(\nu c\) is given explicitly.
\par
Let \(\mathcal{M}(S_g)\) be the set of isotopy classes of auto-diffeomorphisms of \(S_g\), here if \(S_g=\Sigma_g\), we add a condition that auto-diffeomorphisms are orientation preserving. This forms a group and said to be the \textit{mapping class group} of \(S_g\). Note here that we give the operation on \(\mathcal{M}(S_g)\) by \(\lbrack \omega \rbrack \cdot \lbrack \omega' \rbrack := \lbrack \omega' \circ \omega \rbrack\) for \(\lbrack \omega \rbrack , \lbrack \omega' \rbrack \in \mathcal{M}(S_g)\). It is well-known that \(\mathcal{M}(\Sigma_g)\) is generated by Dehn twists (cf. \cite{Dehn}). Moreover, \(t_{c_1}t_{c_2}=t_{c_2}t_{c_1}\) if \(c_1\) and \(c_2\) are disjoint simple closed curves in \(\Sigma_{g}\) and \(t_{\omega(c)}=\Omega^{-1}t_c\Omega\) for \(\Omega=\lbrack \omega \rbrack \in \mathcal{M}(\Sigma_g)\) are also well-known. On the other hand, it is well-known fact that we need not only Dehn twists but also isotopy classes of other diffeomorphisms which are called \textit{crosscap slides} to generate \(\mathcal{M}(N_g)\) (cf. \cite{Lickorish1} and \cite{Lickorish2}). The following relations hold in \(\mathcal{M}(N_g)\):
\begin{equation}\label{(4.1)}
t_{c;\theta_c}=t_{c;-\theta_c}^{-1},
\end{equation}
and for \(\Omega=\lbrack \omega \rbrack \in \mathcal{M}(N_g)\),
\begin{equation}\label{(4.2)}
t_{\omega(c);\omega_{*}(\theta_c)}=\Omega^{-1}t_{c;\theta_c}\Omega,
\end{equation}
where \(-\theta_c\) is the opposite orientation with respect to \(\theta_c\) and \(\omega_{*}(\theta_c)\) means the orientation of \(\nu(\omega(c))\) induced by \(\theta_c\).
\par
Let us recall the diffeomorphism \(J:\Sigma_{g-1} \to \Sigma_{g-1}\) and the orientation double covering map \(\widetilde{J}:\Sigma_{g-1} \to N_g\) in Example \ref{examp2.3}. If a diffeomorphism \(\Psi:\Sigma_{g-1} \to \Sigma_{g-1}\) satisfies \(\Psi \circ J=J \circ \Psi\), then we can obtain a diffeomorphism \(\Psi':N_g \to N_g\) such that \(\widetilde{J} \circ \Psi=\Psi' \circ \widetilde{J}\). We call \(\Psi\) a \textit{lift} of \(\Psi'\).

\begin{lem}[\cite{B and C}, \cite{Szepietowski}]\label{lem4.1}
\textit{For \(g \geqq 3\), the map \(\eta:\mathcal{M}(N_g) \to \mathcal{M}(\Sigma_{g-1})\) defined by \(\eta(\lbrack \omega \rbrack):=\lbrack \widetilde{\omega} \rbrack\) is an injective homomorphism, where \(\widetilde{\omega}\) is exactly one orientation preserving lift of \(\omega\). 
}
\end{lem}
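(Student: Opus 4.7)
My plan is to verify four properties in order: existence of a canonical orientation-preserving lift $\widetilde{\omega}$, invariance of the construction under isotopy, compatibility with composition, and finally injectivity; only the last step will use $g \geq 3$.

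For a representative $\omega:N_g \to N_g$, the composition $\omega \circ \widetilde{J}:\Sigma_{g-1} \to N_g$ is again an orientation double covering map, since its total space $\Sigma_{g-1}$ is orientable and the orientation-reversing compatibility condition of Definition \ref{def2.1}(3) transports through the diffeomorphism $\omega$ from the covering $\widetilde{J}$. Applying Lemma \ref{lem2.5} to the two orientation double coverings $\widetilde{J}$ and $\omega \circ \widetilde{J}$ of $N_g$ produces a unique orientation-preserving diffeomorphism $\widetilde{\omega}:\Sigma_{g-1}\to\Sigma_{g-1}$ with $\widetilde{J} \circ \widetilde{\omega} = \omega \circ \widetilde{J}$. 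One checks that $\widetilde{\omega}$ commutes with the deck involution $J$: both $\widetilde{\omega}\circ J$ and $J\circ\widetilde{\omega}$ are lifts of $\omega$ and both orientation-reversing, so they must coincide (the alternative would force $J = \mathrm{id}$). Hence $\widetilde{\omega}$ is a lift in the sense of the paper. Given an isotopy $H_t$ between two representatives of the same class in $\mathcal{M}(N_g)$, the same construction applied at each time produces a smooth family $\widetilde{H}_t$ of orientation-preserving lifts, interpolating between the lifts of the endpoints, so $\eta$ descends to isotopy classes. For multiplicativity, $\widetilde{\omega_2}\circ\widetilde{\omega_1}$ is orientation-preserving and satisfies $\widetilde{J}\circ(\widetilde{\omega_2}\circ\widetilde{\omega_1}) = (\omega_2\circ\omega_1)\circ\widetilde{J}$, hence equals $\widetilde{\omega_2\circ\omega_1}$ by uniqueness; combined with the composition convention $[\alpha]\cdot[\beta] = [\beta\circ\alpha]$, this yields $\eta([\omega_1]\cdot[\omega_2]) = \eta([\omega_1])\cdot\eta([\omega_2])$.

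The main difficulty, and the only step where $g \geq 3$ is used, is injectivity. Suppose $\widetilde{\omega}$ is isotopic to $\mathrm{id}_{\Sigma_{g-1}}$ via an orientation-preserving isotopy $\widetilde{H}_t$. One would like to push this isotopy down through $\widetilde{J}$ to obtain an isotopy from $\omega$ to $\mathrm{id}_{N_g}$, but $\widetilde{H}_t$ need not be $J$-equivariant for intermediate $t$ even though its endpoints are. Producing a $J$-equivariant isotopy joining two $J$-equivariant diffeomorphisms is a Birman--Hilden type problem for the free involution $J$, and for $g-1 \geq 2$ (equivalently $g\geq 3$) such an equivariant isotopy is guaranteed by the results of Birman--Chillingworth \cite{B and C} and Szepietowski \cite{Szepietowski}. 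I would invoke their theorems directly to close the argument. Reproducing the equivariant isotopy theorem from scratch is the genuine technical content of the lemma and is precisely where the genus hypothesis is indispensable, as it fails for small $g$ where sporadic symmetries of $\Sigma_{g-1}$ can obstruct the equivariant deformation.
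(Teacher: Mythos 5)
The paper offers no proof of Lemma \ref{lem4.1}: it is stated as a result imported from \cite{B and C} and \cite{Szepietowski}, so there is no internal argument to compare yours against. Your reconstruction is sound as far as it goes. The construction of the orientation-preserving lift by applying Lemma \ref{lem2.5} to the two orientation double coverings \(\widetilde{J}\) and \(\omega \circ \widetilde{J}\) is correct (these two maps have literally the same fibres, so condition (3) of Definition \ref{def2.1} is inherited, as you say); so are the verification that \(\widetilde{\omega}\) commutes with \(J\) (the two lifts of \(\omega\) differ by the deck transformation, and comparing orientations rules out the bad case), the descent to isotopy classes via the homotopy lifting property, and the multiplicativity computation with the paper's composition convention \(\lbrack \alpha \rbrack \cdot \lbrack \beta \rbrack = \lbrack \beta \circ \alpha \rbrack\). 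You correctly isolate injectivity as the only substantive point and the only place where \(g \geqq 3\) enters; there, however, your argument is not a proof but an appeal to the equivariant-isotopy theorems of \cite{B and C} and \cite{Szepietowski} --- which is precisely what the paper itself does by attaching those citations to the lemma. In short, your write-up is an honest and accurate expansion of the citation rather than an independent proof; if a self-contained argument were demanded, the Birman--Hilden-type step (promoting an isotopy \(\widetilde{\omega} \simeq \mathrm{id}\) to a \(J\)-equivariant one) would still need to be supplied.
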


Fix a two-sided simple closed curve \(c\) on \(N_g\) and an orientation \(\theta_c\) of \(\nu c \subset N_g\). Since \(\nu c\) is an orientable submanifold of \(N_g\), it follows from Lemma \ref{lem2.4} (2) that \(\widetilde{J}|_{\widetilde{J}^{-1}(\nu c)}:\widetilde{J}^{-1}(\nu c) \to \nu c\) is an orientation double covering map and \(\widetilde{J}^{-1}(\nu c)\) is disconnected. In particular, \(\widetilde{J}^{-1}(\nu c)\) is the direct sum of tubular neighborhoods of two disjoint simple closed curves \(\widetilde{J}^{-1}(c)\), and their orientations are the opposite. So, let \(\gamma\) denote one of the direct sum components of \(\widetilde{J}^{-1}(c)\) such that the orientation of its tubular neighborhood coincides with \(\theta_c\), and we set \(\overline{\gamma}:=\widetilde{J}^{-1}(c)-\gamma\). It is well-known fact that \(\eta(t_{c;\theta_c})=t_{\gamma}t_{\overline{\gamma}}^{-1}\). In addition, we immediately have, by the relation \eqref{(4.2)}, that 
\begin{equation}\label{(4.3)}
\eta(t_{\omega(c);\omega_{*}(\theta_c)})=t_{\widetilde{\omega}(\gamma)}t_{\widetilde{\omega}(\overline{\gamma})}^{-1}
\end{equation}
for an auto-diffeomorphism \(\omega:N_g \to N_g\), where \(\widetilde{\omega}:\Sigma_{g-1} \to \Sigma_{g-1}\) is the orientation preserving lift of \(\omega\).

\subsection{Monodromy factorizations of Lefschetz fibrations.}

Let \(f:X \to \Sigma\) be a genus-\(g\) Lefschetz fibration. Fix \(y \in \Sigma-\Delta_f\) and a diffeomorphism \(\Xi:f^{-1}(y) \to S_g\), here \(\Xi\) is assumed to preserve orientation if \(S_g=\Sigma_g\). For the fiber bundle \(f|_{f^{-1}(\Sigma-\Delta_f)}:f^{-1}(\Sigma-\Delta_f) \to \Sigma-\Delta_f\), let \(a^{*}f|_{f^{-1}(\Sigma-\Delta_f)}:a^{*}f^{-1}(\Sigma-\Delta_f) \to \lbrack 0,1 \rbrack\) denote the pullback of \(f|_{f^{-1}(\Sigma-\Delta_f)}\) by a smooth loop \(a:\lbrack 0,1 \rbrack \to \Sigma-\Delta_f\). Let \(\Phi:\lbrack 0,1 \rbrack \times S_g \to a^{*}f^{-1}(\Sigma-\Delta_f)\) be a trivialization map of \(a^{*}f|_{f^{-1}(\Sigma-\Delta_f)}\) with \(\Phi(0,p)=(0,\Xi^{-1}(p))\) for all \(p \in S_g\), and for each \(t \in \lbrack 0,1 \rbrack\), define a map \(\Phi_t:S_g \to f^{-1}(y)\) as \(\Phi_t(p):=proj_2 \circ \Phi(t,p) \thickspace (p \in S_g)\), where \(proj_2:\lbrack 0,1 \rbrack \times f^{-1}(\Sigma-\Delta_f) \to f^{-1}(\Sigma-\Delta_f)\) is the second projection. Then we get a well-defined homomorphism \(\rho_f:\pi_1(\Sigma-\Delta_f,y) \to \mathcal{M}(S_g)\) which is defined by \(\rho_f(\lbrack a \rbrack):=\lbrack \Xi \circ \Phi_1 \rbrack\) for all \(\lbrack a \rbrack \in \pi_1(\Sigma-\Delta_f,y)\). This is called a \textit{monodromy representation} of \(f\).
\par
Let \(y_1,\cdots,y_n \in \Delta_f\), and we compose a smooth simple curve \(\mu_i:\lbrack 0,1 \rbrack \to \Sigma \thickspace (i=1,\cdots,n)\) with the following properties: 
\begin{itemize}
  \item for each \(i=1,\cdots,n\), \(\mu_i(0)=y\), \(\mu_i(1)=y_i\), and \(\mu_i(\lbrack 0,1)) \subset \Sigma-\Delta_f\);
  \item for \(i \ne j\), \(\mu_i(\lbrack 0,1 \rbrack) \cap \mu_j(\lbrack 0,1 \rbrack)=\{y\}\);
  \item when we travel counterclockwise on a sufficiently small circle centered at \(y\), the simple curves appear in the order \(\mu_1,\cdots,\mu_n\).
\end{itemize}      
The sequence of these simple curves with the above properties \((\mu_1,\cdots,\mu_n)\) is said to be a \textit{Hurwitz system} of \(f\). For each simple curve \(\mu_i\) of a Hurwitz system \((\mu_1,\cdots,\mu_n)\), we can take a smooth simple closed curve \(a_i:\lbrack 0,1 \rbrack \to \Sigma-\Delta_f\) which satisfies the following conditions:
\begin{itemize}
 \item \(a_i(0)=a_i(1)=y\);
 \item \(a_i\) goes counterclockwise around \(y_i\) and goes along with \(\mu_i\).
\end{itemize} 
For \(i=1, \cdots n\), we say that the homotopy class of \(a_i\) is the \textit{meridian loop} of \(f\) with respect to \(\mu_i\). It is well-known that \(\rho_f(\lbrack a_i \rbrack)\) is equal to the Dehn twist about the vanishing cycle of \(f\) determined by \(\mu_i\). Here, we note a few things about this. First, consider the case of \(f\) being orientable. In this case, the Dehn twist about the vanishing cycle determined by \(\mu_i\) is right handed (resp. left handed) if the critical point of \(f\) belonging to \(\mathrm{Crit}(f) \cap f^{-1}(y_i)\) is positive (resp. negative). Next, consider the case of \(f\) being non-orientable. From Remark \ref{rem3.2}, notice that the Dehn twist about each vanishing cycle of \(f\) can be defined. In this case, whether the Dehn twist about the vanishing cycle determined by \(\mu_i\) is right handed or left handed depends on an orientation of a tubular neighborhood of the vanishing cycle because of the relation \eqref{(4.1)}.
\par
Now, let \(f:X \to \Sigma\) be a genus-\(g\) non-orientable Lefschetz fibration, and we use exactly the same symbols for \(f\) as we used above. Notice that the codomain of a fixed diffeomorphism \(\Xi\) is equal to \(N_g\). In what follows, \(\pi:\widetilde{X} \to X\) is the standard orientation double covering map.

\begin{prop}\label{prop4.2}
\textit{
Let \(\eta:\mathcal{M}(N_g) \to \mathcal{M}(\Sigma_{g-1})\) be the injective homomorphism defined in Lemma \ref{lem4.1}. For \(g \geqq 3\), the following equality holds}:
\[
\rho_{f \circ \pi}=\eta \circ \rho_f.
\]
\end{prop}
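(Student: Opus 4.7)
The plan is to verify the equality $\rho_{f\circ\pi}([a])=\eta(\rho_f([a]))$ for each class $[a]\in\pi_1(\Sigma-\Delta_f,y)$ by comparing monodromies along a representative loop $a:[0,1]\to\Sigma-\Delta_f$ via trivializations that are compatible under $\pi$. Note first that Proposition~\ref{prop1.1} and its proof give $\Delta_{f\circ\pi}=\Delta_f$, so the two homomorphisms are defined on the same fundamental group.

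Given a trivialization $\Phi:[0,1]\times N_g \to a^{*}f^{-1}(\Sigma-\Delta_f)$ with $\Phi(0,p)=(0,\Xi^{-1}(p))$, I would lift it to a trivialization $\widetilde{\Phi}:[0,1]\times\Sigma_{g-1}\to a^{*}(f\circ\pi)^{-1}(\Sigma-\Delta_f)$ satisfying
\[
\widetilde{\Phi}(0,\tilde{p})=(0,\widetilde{\Xi}^{-1}(\tilde{p})),\qquad \pi\circ\widetilde{\Phi}_t=\Phi_t\circ\widetilde{J}\quad (t\in[0,1]),
\]
where $\widetilde{\Xi}:(f\circ\pi)^{-1}(y)\to\Sigma_{g-1}$ is the orientation-preserving diffeomorphism from the proof of Proposition~\ref{prop1.1}. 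Such a $\widetilde{\Phi}$ exists and is unique because $\pi$ restricts to an orientation double cover over $f^{-1}(\Sigma-\Delta_f)$, whose pullback through $\Phi$ over the simply connected interval $[0,1]$ is identified with $\mathrm{id}_{[0,1]}\times\widetilde{J}$ by Lemma~\ref{lem2.5}, and the identification at $t=0$ is dictated by the commutative diagram~\eqref{diagram(2.1)}.

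Setting $t=1$ and combining the relation $\pi\circ\widetilde{\Phi}_1=\Phi_1\circ\widetilde{J}$ with the diagram~\eqref{diagram(2.1)} at $y=a(1)$ yields
\[
\widetilde{J}\circ(\widetilde{\Xi}\circ\widetilde{\Phi}_1)=\Xi\circ\pi\circ\widetilde{\Phi}_1=(\Xi\circ\Phi_1)\circ\widetilde{J},
\]
so $\widetilde{\Xi}\circ\widetilde{\Phi}_1$ is a lift of $\Xi\circ\Phi_1$ through $\widetilde{J}$. Since $\widetilde{\Xi}$ is orientation-preserving by Lemma~\ref{lem2.5} and $\widetilde{\Phi}_1$ is the time-$1$ slice of a trivialization of an oriented fiber bundle (the fibers of $f\circ\pi$ over $\Sigma-\Delta_f$ inherit an orientation from $\widetilde{X}$), the composition is the unique orientation-preserving lift of $\Xi\circ\Phi_1$. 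By the definition of $\eta$ in Lemma~\ref{lem4.1} this gives
\[
\rho_{f\circ\pi}([a])=[\widetilde{\Xi}\circ\widetilde{\Phi}_1]=\eta([\Xi\circ\Phi_1])=\eta(\rho_f([a])).
\]

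The main obstacle is the careful construction of the compatible trivialization $\widetilde{\Phi}$ and the orientation bookkeeping needed to guarantee that $\widetilde{\Xi}\circ\widetilde{\Phi}_1$ is orientation-preserving. A possible sanity check is to specialize to a meridian loop $a_i$ around a critical value $y_i\in\Delta_f$: Proposition~\ref{prop3.1} identifies the vanishing cycles of $f\circ\pi$ above $y_i$ as the two components $\gamma_i,\overline{\gamma_i}$ of $\pi^{-1}(c_i)$, one coming from a positive and one from a negative critical point of $f\circ\pi$ by Proposition~\ref{prop1.1}, so $\rho_{f\circ\pi}([a_i])=t_{\gamma_i}t_{\overline{\gamma_i}}^{-1}$, which matches $\eta(t_{c_i;\theta_{c_i}})$ by the formula recalled in Subsection~\ref{subsec.4.1}.
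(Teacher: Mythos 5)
Your proposal is correct and follows essentially the same route as the paper: both lift the trivialization $\Phi$ to $\widetilde{\Phi}$ by applying Lemma \ref{lem2.5} to the orientation double cover of the pullback bundle, pin down the $t=0$ normalization via the orientation-preserving property and diagram \eqref{diagram(2.1)}, and then read off from the resulting commutative square at $t=1$ that $\widetilde{\Xi}\circ\widetilde{\Phi}_1$ is the orientation-preserving lift of $\Xi\circ\Phi_1$, whence $\eta$ matches the two monodromies. The only cosmetic difference is that you impose the $t=0$ condition and argue uniqueness, while the paper takes the unique orientation-preserving lift and then verifies that condition; your closing sanity check against Proposition \ref{prop3.1} is a nice addition but not a different argument.
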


\begin{proof}
It is easily seen that the domains of \(\rho_{f \circ \pi}\) and \(\eta \circ \rho_f\) are the same since \(\Sigma-\Delta_{f \circ \pi}=\Sigma-\Delta_f\) holds. By applying Lemma \ref{lem2.5}, for a fixed diffeomorphism \(\Xi:f^{-1}(y) \to N_g\), we get a unique orientation preserving diffeomorphism \(\widetilde{\Xi}:(f \circ \pi)^{-1}(y) \to \Sigma_{g-1}\) such that the diagram (\ref{diagram(2.1)}) commutes. Take the pull-back \(a^{*}(f \circ \pi)|_{(f \circ \pi)^{-1}(\Sigma-\Delta_f)}:a^{*}(f \circ \pi)^{-1}(\Sigma-\Delta_f) \to \lbrack 0,1 \rbrack\) of the fiber bundle \((f \circ \pi)|_{(f \circ \pi)^{-1}(\Sigma-\Delta_f)}:(f \circ \pi)^{-1}(\Sigma-\Delta_f) \to \Sigma-\Delta_f\) by \(a\). Note that \((id_{\lbrack 0,1 \rbrack} \times \pi)|_{a^{*}(f \circ \pi)^{-1}(\Sigma-\Delta_f)}:a^{*}(f \circ \pi)^{-1}(\Sigma-\Delta_f) \to a^{*}f^{-1}(\Sigma-\Delta_f)\) is an orientation double covering map since \(a^{*}f^{-1}(\Sigma-\Delta_f)\) is a submanifold of \(\lbrack 0,1 \rbrack \times f^{-1}(\Sigma-\Delta_f)\) and \(a^{*}(f \circ \pi)^{-1}(\Sigma-\Delta_f)=\pi^{-1}(a^{*}f^{-1}(\Sigma-\Delta_f))\). Lemma \ref{lem2.5} hence gives a unique orientation preserving diffeomorphism \(\widetilde{\Phi}:\lbrack 0,1 \rbrack \times \Sigma_{g-1} \to a^{*}(f \circ \pi)^{-1}(\Sigma-\Delta_f)\) for the diffeomorphism \(\Phi:\lbrack 0,1 \rbrack \times N_g \to a^{*}f^{-1}(\Sigma-\Delta_f)\) so that the upper rectangle of the diagram
\begin{equation}\label{diagram(4.4)}
\xymatrix{
{\lbrack 0,1 \rbrack \times \Sigma_{g-1}}\ar[rr]^-{\widetilde{\Phi}}\ar[d]_-{id_{\lbrack 0,1 \rbrack} \times \widetilde{J}}\ar@{}[rd]|{ }&&{a^{*}(f \circ \pi)^{-1}(\Sigma-\Delta_f)}\ar[d]^-{(id_{\lbrack 0,1 \rbrack} \times \pi)|_{a^{*}(f \circ \pi)^{-1}(\Sigma-\Delta_f)}}\\
{\lbrack 0,1 \rbrack \times N_g}\ar[rr]^-{\Phi}\ar[dr]_-{proj_1}&\ar@{}[rd]|{ }&{a^{*}f^{-1}(\Sigma-\Delta_f)}\ar[dl]^-{\thickspace \thickspace \thickspace \thickspace a^{*}f|_{f^{-1}(\Sigma-\Delta_f)}}\\
&\lbrack 0,1 \rbrack&
}
\end{equation}
commutes, where \(id_{\lbrack 0,1 \rbrack}:\lbrack 0,1 \rbrack \to \lbrack 0,1 \rbrack\) is the identity map and \(proj_1:\lbrack 0,1 \rbrack \times N_g \to \lbrack 0,1 \rbrack\) is the first projection. Since \(\Phi\) is a trivialization map, the lower triangle of the diagram (\ref{diagram(4.4)}) is commutative. So, it follows that the diagram (\ref{diagram(4.4)}) commutes, which implies that \(\widetilde{\Phi}\) is a trivialization map of \(a^{*}(f \circ \pi)|_{(f \circ \pi)^{-1}(\Sigma-\Delta_f)}\). From the commutativity of the diagram (\ref{diagram(4.4)}), we have \(\widetilde{\Phi}(0,p) \in \{0\} \times \pi^{-1}(\pi(\widetilde{\Xi}^{-1}(p)))\) for all \(p \in \Sigma_{g-1}\). Define a map \(r:(f \circ \pi)^{-1}(y) \to (f \circ \pi)^{-1}(y)\) to be \(r(x,+o_x):=(x,-o_x)\) and \(r(x,-o_x):=(x,+o_x)\) for any \((x,+o_x),(x,-o_x) \in (f \circ \pi)^{-1}(y)\), which is the orientation reversing involution belonging to \(\mathrm{Aut}(\pi|_{(f \circ \pi)^{-1}(y)})\). Then for each \(p \in \Sigma_{g-1}\), the set \(\pi^{-1}(\pi(\widetilde{\Xi}^{-1}(p)))\) is equal to \(\{\widetilde{\Xi}^{-1}(p),(r \circ \widetilde{\Xi}^{-1})(p)\}\). Noticing that \(\widetilde{\Phi}\) is orientation preserving, we observe that \(\widetilde{\Phi}(0,p)\) equals to \((0,\widetilde{\Xi}^{-1}(p))\) for all \(p \in \Sigma_{g-1}\). Therefore, a monodromy representation \(\rho_{f \circ \pi}:\pi_1(\Sigma-\Delta_{f \circ \pi},y) \to \mathcal{M}(\Sigma_{g-1})\) of \(f \circ \pi\) can be defined as \(\rho_{f \circ \pi}(\lbrack a \rbrack):=\lbrack \widetilde{\Xi} \circ \widetilde{\Phi}_1 \rbrack\) for all \(\lbrack a \rbrack \in \pi_1(\Sigma-\Delta_{f \circ \pi},y)\), where the map \(\widetilde{\Phi}_1:\Sigma_{g-1} \to (f \circ \pi)^{-1}(y)\) is defined by \(\widetilde{\Phi}_1(p):=(\widetilde{proj_2} \circ \widetilde{\Phi})(1,p)\) for all \(p \in \Sigma_{g-1}\) and \(\widetilde{proj_2}:\lbrack 0,1 \rbrack \times (f \circ \pi)^{-1}(\Sigma - \Delta_{f \circ \pi}) \to (f \circ \pi)^{-1}(\Sigma - \Delta_{f \circ \pi})\) is the second projection. Moreover, it immediately follows from the commutative diagram (\ref{diagram(4.4)}) that the following diagram is commutative:
\begin{equation}\label{diagram(4.5)}
\xymatrix{
\Sigma_{g-1}\ar[r]^-{\widetilde{\Phi}_1}\ar[d]_-{\widetilde{J}}\ar@{}[rd]|{ }
&(f \circ \pi)^{-1}(y)\ar[r]^-{\widetilde{\Xi}}\ar[d]_-{\pi|_{(f \circ \pi)^{-1}(y)}}\ar@{}[rd]|{ }&\Sigma_{g-1}\ar[d]^-{\widetilde{J}}\\
N_g \ar[r]_-{\Phi_1}&f^{-1}(y) \ar[r]_-{\Xi}&N_g.
}
\end{equation}
The above comuutative diagram (\ref{diagram(4.5)}) shows that \(\widetilde{\Xi} \circ \widetilde{\Phi}_1\) is a lift of \(\Xi \circ \Phi_1\), that is, \(\eta(\lbrack \Xi \circ \Phi_1 \rbrack)=\lbrack \widetilde{\Xi} \circ \widetilde{\Phi}_1 \rbrack\) holds. This yields the assertion.
\end{proof}

\par
The rest of this subsection, suppose that \(\Sigma\) equals to either \(\mathbb{D}^2\) or \(\mathbb{S}^2\). Let \(f:X \to \Sigma\) be a Lefschetz fibration of genus-\(g\), and fix \(y \in \Sigma-\Delta_f\) and a diffeomorphism \(\Xi:f^{-1}(y) \to S_g\), here we asuume that \(\Xi\) is orirentation preserving if \(f\) is orientable. Also, let \(\zeta_1, \cdots ,\zeta_n \in \{1,-1\}\). Choosing a Hurwitz system \((\mu_1,\cdots,\mu_n)\) of \(f\), we obtain the vanishing cycle \(C_i \subset S_g\) of \(f\) determined by \(\mu_i\) and the meridian loop \(\alpha_i \in \pi_1(\Sigma-\Delta_f,y)\) of \(f\) with respect to \(\mu_i\) (\(i=1, \cdots , n\)).  We first consider the case of \(\Sigma=\mathbb{D}^2\). It is obvious that the product \(\alpha_1 \cdots \alpha_n\) is equal to the homotopy class \(\lbrack \partial \mathbb{D}^2 \rbrack\) in \(\pi_1(\mathbb{D}^2-\Delta_f,y)\). Therefore, mapping both sides of this equation by a monodromy representation \(\rho_f:\pi_1(\mathbb{D}^2-\Delta_f,y) \to \mathcal{M}(S_g)\) of \(f\), the monodromy of \(\lbrack \partial \mathbb{D}^2 \rbrack\) is represented as \(t_{C_1}^{\zeta_1}, \cdots ,t_{C_n}^{\zeta_n}\). This product of Dehn twists is called a \textit{monodromy factorization} of \(f\). Next, consider the case of \(\Sigma=\mathbb{S}^2\). Since \(\pi_1(\mathbb{S}^2-\Delta_f,y)=\langle \alpha_1, \cdots ,\alpha_n \thickspace | \thickspace \alpha_1 \cdots \alpha_n=1 \rangle\), the relation \(\alpha_1 \cdots \alpha_n=1\) is provided. We then get the relation \(t_{C_1}^{\zeta_1}, \cdots ,t_{C_n}^{\zeta_n}=1\) in \(\mathcal{M}(S_g)\) by \(\rho_f\), which is also said to be a \textit{monodromy factorization} of \(f\). 
\par
According to the above, we see that a relation about a product of Dehn twists is obtained from a Lefschetz fibration. On the other hand, the converse also holds, that is, if (two-sided) simple closed curves \(c_1, \cdots ,c_n\) in \(S_g\) are given, then there is a gunus-\(g\) Lefschetz fibration over \(\mathbb{D}^2\) and its Hurwitz system \((\mu_1,\cdots,\mu_n)\) such that each \(c_i\) is the vanishing cycle of \(f\) determined by \(\mu_i\) (\(i=1, \cdots ,n)\). In a similar fashion, if (two-sided) simple closed curves \(c_1, \cdots ,c_n\) in \(S_g\) satisfying \(t_{c_1}^{\zeta_1}, \cdots ,t_{c_n}^{\zeta_n}=1\) in \(\mathcal{M}(S_g)\) are given, then there is a gunus-\(g\) Lefschetz fibration over \(\mathbb{S}^2\) and its Hurwitz system \((\mu_1,\cdots,\mu_n)\) equipped with the same condition above.
\par
Using the relation \eqref{(4.1)}, a monodromy factorization of a non-orientable Lefschetz fibration can be consturucted by a product of right handed Dehn twists. We call such monodromy factorization \textit{positive}.
\par
The next corollary follows directly from Proposition \ref{prop4.2}.

\begin{cor}\label{cor4.3}
\textit{Assume that \(g \geqq 3\) and a positive monodromy factorization \(t_{{c}_1;\theta_{{c}_1}} \cdots t_{{c}_n;\theta_{{c}_n}}\) of a non-orientable Lefschetz fibration \(f\) is given. Let \(\gamma_i \amalg \overline{\gamma_i}\) be the inverse image of \(c_i\) under \(\widetilde{J}\) determined by \(\theta_{c_i}\) as the last part of Subsection \ref{subsec.4.1} \((i=1, \cdots ,n)\). A monodromy factorization of the achiral Lefschetz fibration \(f \circ \pi\) is given by \(t_{{\gamma}_1}t_{\overline{{\gamma}_1}}^{-1} \cdots t_{{\gamma}_n}t_{\overline{{\gamma}_n}}^{-1}\).}
\end{cor}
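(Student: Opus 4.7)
The strategy is to apply $\eta$ termwise to the given positive monodromy factorization of $f$, using Proposition \ref{prop4.2} to transfer the resulting identity in $\mathcal{M}(\Sigma_{g-1})$ into a monodromy factorization of $f \circ \pi$. First, I would fix a Hurwitz system $(\mu_1, \ldots, \mu_n)$ of $f$ realizing the factorization $t_{c_1;\theta_{c_1}} \cdots t_{c_n;\theta_{c_n}}$, together with the associated meridian loops $\alpha_1, \ldots, \alpha_n \in \pi_1(\Sigma - \Delta_f, y)$, so that $\rho_f(\alpha_i) = t_{c_i;\theta_{c_i}}$ for each $i$. Since $\Delta_{f \circ \pi} = \Delta_f$ (as noted at the start of the proof of Proposition \ref{prop4.2}), these same loops can be regarded as elements of $\pi_1(\Sigma - \Delta_{f \circ \pi}, y)$.

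Combining Proposition \ref{prop4.2} with the explicit formula $\eta(t_{c;\theta_c}) = t_\gamma t_{\overline{\gamma}}^{-1}$ recorded in Subsection \ref{subsec.4.1} (where $\gamma$ is the component of $\widetilde{J}^{-1}(c)$ whose tubular neighborhood orientation matches $\theta_c$), one obtains
\[
\rho_{f \circ \pi}(\alpha_i) \;=\; \eta(\rho_f(\alpha_i)) \;=\; \eta(t_{c_i;\theta_{c_i}}) \;=\; t_{\gamma_i} t_{\overline{\gamma_i}}^{-1}
\]
for each $i$. Since $\eta$ is a homomorphism by Lemma \ref{lem4.1}, multiplying across $i = 1, \ldots, n$ yields the product $t_{\gamma_1} t_{\overline{\gamma_1}}^{-1} \cdots t_{\gamma_n} t_{\overline{\gamma_n}}^{-1}$, which coincides with $\rho_{f \circ \pi}([\partial \mathbb{D}^2])$ when $\Sigma = \mathbb{D}^2$ and equals the identity of $\mathcal{M}(\Sigma_{g-1})$ when $\Sigma = \mathbb{S}^2$.

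The last step is to interpret this product as an honest monodromy factorization of $f \circ \pi$. By Remark \ref{rem2.7} I may perturb $f \circ \pi$ so that each of its $2n$ critical points sits in a distinct singular fiber; concretely, each $y_i$ splits into two nearby critical values $y_i^+, y_i^-$ lying below the critical points $(x_i, +o_{x_i})$ and $(x_i, -o_{x_i})$, and the meridian loop $\alpha_i$ decomposes as a concatenation of two meridian loops around $y_i^+$ and $y_i^-$. Proposition \ref{prop3.1} then identifies the corresponding vanishing cycles as $\gamma_i$ and $\overline{\gamma_i}$, while Proposition \ref{prop1.1} ensures one of the two lifted critical points is positive and the other negative. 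The main delicate point will be matching the correct lift with the correct sign, i.e., showing that $\gamma_i$ (the component matching $\theta_{c_i}$) corresponds to the right-handed Dehn twist and $\overline{\gamma_i}$ to the left-handed one; this sign correspondence is exactly what is encoded in the formula $\eta(t_{c;\theta_c}) = t_\gamma t_{\overline{\gamma}}^{-1}$ and traces back to the orientation convention distinguishing $\varphi^+$ from $\varphi^-$ in Example \ref{examp2.3}, so no further input beyond Proposition \ref{prop4.2} is required to conclude.
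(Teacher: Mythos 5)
Your proposal is correct and follows essentially the same route as the paper: both apply Proposition \ref{prop4.2} to the meridian loops of a Hurwitz system realizing the given factorization and use the formula \(\eta(t_{c;\theta_c}) = t_{\gamma}t_{\overline{\gamma}}^{-1}\) to obtain \(t_{\gamma_1}t_{\overline{\gamma_1}}^{-1}\cdots t_{\gamma_n}t_{\overline{\gamma_n}}^{-1}\). Your final paragraph spells out the interpretation of this product as an honest monodromy factorization (splitting of critical values, identification of the lifted vanishing cycles via Proposition \ref{prop3.1}) in more detail than the paper, which simply concludes at that point, but this is elaboration rather than a different argument.
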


\begin{proof}
We suppose that a Hurwitz system \((\mu_1,\cdots,\mu_n)\) of \(f\) and the meridian loops \(\alpha_1, \cdots ,\alpha_n\) about it give the mondromy factorization  \(t_{{c}_1;\theta_{{c}_1}} \cdots t_{{c}_n;\theta_{{c}_n}}\). Since \(\Delta_{f \circ \pi}=\Delta_{f}\), we can take \((\mu_1,\cdots,\mu_n)\) as a Hurwitz system of \(f\circ \pi\). In particular, it follows that \(\alpha_1, \cdots ,\alpha_n\) are meridian loops of \(f \circ \pi\). By applying Proposition \ref{prop4.2},
\begin{equation*}
    \begin{split}
\rho_{f \circ \pi}(\alpha_1 \cdots \alpha_n)
&=(\eta \circ \rho_{f})(\alpha_1 \cdots \alpha_n) \\
&=\eta(t_{{c}_1;\theta_{{c}_1}} \cdots t_{{c}_n;\theta_{{c}_n}}) \\
&=t_{{\gamma}_1}t_{\overline{{\gamma}_1}}^{-1} \cdots t_{{\gamma}_n}t_{\overline{{\gamma}_n}}^{-1}.
    \end{split}
\end{equation*}
This leads to the desired conclusion.
\end{proof}

\subsection{Isomorphism classes of Lefschetz fibrations and Hurwitz equivalence.}

Throughout this subsection, we assume that Lefschetz fibrations are \textit{relatively minimal}, that is, no singular fiber contains a sphere with self intersection number \(-1\). 

\begin{dfn}
Let \(f_1:X_1 \to \Sigma\) and \(f_2:X_2 \to \Sigma\) be Lefschetz fibrations. They are said to be \textit{isomorphic} if there exist a diffeomorphism \(H:X_1 \to X_2\) and an orientation preserving diffeomorphism \(h:\Sigma \to \Sigma\) such that \(f_2 \circ H=h \circ f_1\), here \(H\) is assumed to be orientation preserving if \(f_1\) and \(f_2\) are orientable.
\end{dfn}

We now introduce the following transformations (HE1-1)-(HE2) between two products of Dehn twists in \(\mathcal{M}(\Sigma_{g})\): For \(\zeta_i \in \{1,-1\} \thickspace (i=1, \cdots ,n)\),
\begin{itemize}
\item[(HE1)]\underline{\textit{elementary transformations}}
\begin{itemize}
\item[(HE1-1)] 
\begin{gather*}
{t_{c_1}}^{\zeta_1} \cdots {t_{c_i}}^{\zeta_i} {t_{c_{i+1}}}^{\zeta_{i+1}} {t_{c_{i+2}}}^{\zeta_{i+2}}       \cdots {t_{c_n}}^{\zeta_n} \\
\leftrightarrow {t_{c_1}}^{\zeta_1} \cdots {t_{c_i}}^{\zeta_i} {t_{c_{i+1}}}^{\zeta_{i+1}} ({t_{c_i}}^{- \zeta_i}{t_{c_i}}^{\zeta_i}) {t_{c_{i+2}}}^{\zeta_{i+2}} \cdots {t_{c_n}}^{\zeta_n};
\end{gather*}
\item[(HE1-2)]
\begin{gather*}
{t_{c_1}}^{\zeta_1} \cdots {t_{c_{i-1}}}^{\zeta_{i-1}}  {t_{c_i}}^{\zeta_i} {t_{c_{i+1}}}^{\zeta_{i+1}}  \cdots {t_{c_n}}^{\zeta_n} \\
\leftrightarrow {t_{c_1}}^{\zeta_1} \cdots {t_{c_{i-1}}}^{\zeta_{i-1}} ({t_{c_{i+1}}}^{\zeta_{i+1}} {t_{c_{i+1}}}^{-\zeta_{i+1}}) {t_{c_i}}^{\zeta_i} {t_{c_{i+1}}}^{\zeta_{i+1}} \cdots {t_{c_n}}^{\zeta_n};
\end{gather*}
\end{itemize}
\item[(HE2)]\underline{\textit{simultaneous conjugations}}
\begin{equation*}
{t_{c_1}}^{\zeta_1} \cdots {t_{c_n}}^{\zeta_n} \leftrightarrow {t_{\sigma(c_1)}}^{\zeta_1} \cdots {t_{\sigma(c_n)}}^{\zeta_n}
\end{equation*}
for an orientation preserving auto-diffeomorphism \(\sigma:\Sigma_{g} \to \Sigma_{g}\).
\end{itemize}

\noindent
If two products of Dehn twists change each other by the above transformations, then they are called \textit{Hurwitz equivalent}. The next theorem states that (HE1-1)-(HE2) are very informative for conducting combinatorial studies about orientable Lefschetz fibrations, i.e., orientable 4-manifolds.

\begin{thm}[\cite{Kas}, \cite{Matsumoto}]\label{thm4.5}
\textit{
Let \(f_i:X_i \to \Sigma\) be a genus-\(g\) orientable Lefschetz fibration over either \(\mathbb{D}^2\) or \(\mathbb{S}^2\) \((i=1,2)\), where if \(\Sigma=\mathbb{S}^2\), suppose that \(g \geqq 2\). Then \(f_1\) is isomorphic to \(f_2\) if and only if their monodromy factorizations are Hurwitz equivalent.
}  
\end{thm}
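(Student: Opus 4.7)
The plan is to exploit the reconstruction principle collected at the end of Subsection 4.2: a genus-$g$ orientable Lefschetz fibration over $\mathbb{D}^2$ is determined up to isomorphism by a list of simple closed curves $c_1, \ldots, c_n$ on $\Sigma_g$ (with all signs positive for genuinely orientable fibrations), and one over $\mathbb{S}^2$ by such a list satisfying $t_{c_1}^{\zeta_1} \cdots t_{c_n}^{\zeta_n} = 1$ in $\mathcal{M}(\Sigma_g)$. Both directions of the theorem then translate into statements about when two such lists give isomorphic reconstructions, and the three listed transformations should correspond exactly to the three pieces of auxiliary data required to build a monodromy factorization from a fibration: the reference trivialization $\Xi$, the Hurwitz system $(\mu_1,\ldots,\mu_n)$, and the ambient isomorphism itself.

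For the ``if'' direction, I would check that each of (HE1-1), (HE1-2), (HE2) preserves the isomorphism class of the reconstructed total space. The move (HE2) corresponds to replacing $\Xi$ by $\sigma \circ \Xi$ for an orientation preserving $\sigma : \Sigma_g \to \Sigma_g$, which conjugates every monodromy and sends $t_{c_i}^{\zeta_i}$ to $t_{\sigma(c_i)}^{\zeta_i}$. The moves (HE1-1) and (HE1-2) implement an elementary braiding of two adjacent arcs $\mu_i, \mu_{i+1}$: the formally inserted trivial pair $t_{c_i}^{-\zeta_i} t_{c_i}^{\zeta_i}$ is absorbed using the Dehn-twist relation $\Omega^{-1} t_c \Omega = t_{\omega(c)}$ to produce the classical Hurwitz replacement $(t_{c_i}^{\zeta_i}, t_{c_{i+1}}^{\zeta_{i+1}}) \leftrightarrow (t_{t_{c_i}^{\zeta_i}(c_{i+1})}^{\zeta_{i+1}}, t_{c_i}^{\zeta_i})$. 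Geometrically this braiding is induced by an ambient isotopy of $\Sigma$ exchanging two adjacent critical values, which lifts to a fibered self-diffeomorphism of $X$ and therefore gives an isomorphism of fibrations.

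For the ``only if'' direction, suppose $(H, h) : f_1 \to f_2$ is an isomorphism. Fix a Hurwitz system $(\mu_1, \ldots, \mu_n)$ and a trivialization $\Xi_1$ of $f_1^{-1}(y)$; then $(h \circ \mu_1, \ldots, h \circ \mu_n)$ is a Hurwitz system for $f_2$ at $h(y)$, and $\Xi_1 \circ (H|_{f_1^{-1}(y)})^{-1}$ is a companion trivialization of $f_2^{-1}(h(y))$. With these matched choices the two monodromy factorizations of $f_1$ and $f_2$ coincide verbatim. The residual freedom --- that we must be allowed to start from any trivialization and any Hurwitz system on each side --- is absorbed precisely by (HE2) (any two orientation preserving trivializations of $\Sigma_g$ differ by an element of $\mathcal{M}(\Sigma_g)$) and by iterating (HE1-1)/(HE1-2) (any two Hurwitz systems at a common base point differ by a finite sequence of elementary braidings, which correspond bijectively to these moves by the previous paragraph).

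The main obstacle is the $\mathbb{S}^2$ case, where one must patch the Lefschetz fibration over a disk $\mathbb{D}^2 \subset \mathbb{S}^2$ containing every critical value with a trivial $\Sigma_g$-bundle over the complementary disk. The set of such gluings modulo bundle isomorphism is governed by $\pi_0 \mathrm{Diff}^+(\Sigma_g)$ only when the higher homotopy of $\mathrm{Diff}^+(\Sigma_g)$ vanishes; this is exactly the content of the Earle--Eells theorem under the hypothesis $g \geq 2$, and it is what allows the relation $t_{c_1}^{\zeta_1} \cdots t_{c_n}^{\zeta_n} = 1$ on the nose (rather than merely as a conjugacy statement) to recover the global fibration. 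Verifying that this vanishing is correctly invoked when translating between an isomorphism of fibrations over $\mathbb{S}^2$ and a Hurwitz equivalence of factorizations --- and checking that no hidden freedom is overlooked when capping off at infinity --- will be the most delicate step of the proof.
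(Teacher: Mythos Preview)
The paper does not prove Theorem~\ref{thm4.5} at all: it is quoted as a known result from \cite{Kas} and \cite{Matsumoto} and is used as a black box in the proof of Theorem~\ref{thm1.2}. There is therefore no ``paper's own proof'' to compare your proposal against.

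That said, your sketch follows the standard line of argument one finds in those references: identify monodromy factorizations with the data of a reference trivialization $\Xi$ plus a Hurwitz system, observe that changing $\Xi$ gives (HE2) while braiding adjacent arcs of the Hurwitz system gives (HE1-1)/(HE1-2), and for the $\mathbb{S}^2$ case invoke the contractibility of the identity component of $\mathrm{Diff}^+(\Sigma_g)$ for $g\geq 2$ (Earle--Eells) so that the capping-off is unique. This is exactly the approach of Kas and Matsumoto, so your outline is on the right track; the only caveat is that what you have written is a plan rather than a proof, and the delicate points you flag (that any two Hurwitz systems are related by a finite sequence of elementary braidings, and that the Earle--Eells vanishing suffices to control the gluing over the second disk) are precisely the places where real work is needed and where the cited papers spend their effort.
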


We remark that it follows from elementary transformations that the isomorphism class of an orientable Lefschetz fibration does not change if the following substitution is applied to its monodromy factorization: 
\begin{gather*}
{t_{c_1}}^{\zeta_1} \cdots {t_{c_{i-1}}}^{\zeta_{i-1}} \underline{{t_{c_i}}^{\zeta_i} {t_{c_{i+1}}}^{\zeta_{i+1}}} {t_{c_{i+2}}}^{\zeta_{i+2}} \cdots {t_{c_n}}^{\zeta_n} \\
\leftrightarrow {t_{c_1}}^{\zeta_1} \cdots {t_{c_{i-1}}}^{\zeta_{i-1}} \underline{{t_{c_{i+1}}}^{\zeta_{i+1}} {t_{c_i}}^{\zeta_i}} {t_{c_{i+2}}}^{\zeta_{i+2}} \cdots {t_{c_n}}^{\zeta_n}
\end{gather*}
for \(\zeta_i \in \{1,-1\} \thickspace (i=1, \cdots ,n)\) and disjoint simple closed curves \(c_i\) and \(c_{i+1}\) in \(\Sigma_{g}\).
\par
Let us prove the main theorem. From now on, suppose that \(g \geqq 3\) and \(\Sigma\) is equal to either \(\mathbb{D}^2\) or \(\mathbb{S}^2\). 

\begin{proof}[Proof of Theorem \ref{thm1.2}.]
The `only if' part is proved by the same way as in the orientable case (see \cite{Kas} and \cite{Matsumoto}).\par
We show the `if' part. Let \(f:X \to \Sigma\) be a genus-\(g\) non-orientable Lefschetz fibration with a positive monodromy factorization \(t_{{c}_1;\theta_{{c}_1}} \cdots t_{{c}_n;\theta_{{c}_n}}\) and \(\pi:\widetilde{X} \to X\) the standard orientation double covering map. The product \({t_{c_1;\theta_{c_1}}} \cdots {t_{c_i;\theta_{c_i}}} {t_{c_{i+1};\theta_{c_{i+1}}}}({t_{c_i;\theta_{c_i}}}^{-1} {t_{c_i;\theta_{c_i}}}) {t_{c_{i+2};\theta_{c_{i+2}}}} \cdots {t_{c_n;\theta_{c_{n}}}}\) gives a genus-\(g\) non-orientable Lefschetz fibration, and let \(f_{\mathrm{i-1}}:X_{\mathrm{i-1}} \to \Sigma\) denote this. According to Corollary \ref{cor4.3}, we see that monodromy factorizations of \(f \circ \pi\) and \(f_{\mathrm{i-1}} \circ \pi_{\mathrm{i-1}}\) are given by \(t_{{\gamma}_1}t_{\overline{{\gamma}_1}}^{-1} \cdots t_{{\gamma}_n}t_{\overline{{\gamma}_n}}^{-1}\) and \(t_{{\gamma}_1}t_{\overline{{\gamma}_1}}^{-1} \cdots t_{{\gamma}_i}t_{\overline{{\gamma}_i}}^{-1} t_{{\gamma}_{i+1}}t_{\overline{{\gamma}_{i+1}}}^{-1} (t_{\overline{{\gamma}_i}}t_{{\gamma}_i}^{-1} \\ t_{{\gamma}_i}  t_{\overline{{\gamma}_i}}^{-1}) t_{{\gamma}_{i+2}}t_{\overline{{\gamma}_{i+2}}}^{-1} \cdots t_{{\gamma}_n}t_{\overline{{\gamma}_n}}^{-1}\), respectively, where \(\pi_{\mathrm{i-1}}:\widetilde{X_{\mathrm{i-1}}} \to X_{\mathrm{i-1}}\) indicates the standard orientation double covering map. Then
\begin{equation*}
\begin{split} 
&t_{{\gamma}_1}t_{\overline{{\gamma}_1}}^{-1} \cdots \underline{t_{{\gamma}_i}t_{\overline{{\gamma}_i}}^{-1}} t_{{\gamma}_{i+1}}t_{\overline{{\gamma}_{i+1}}}^{-1}  t_{{\gamma}_{i+2}}t_{\overline{{\gamma}_{i+2}}}^{-1} \cdots t_{{\gamma}_n}t_{\overline{{\gamma}_n}}^{-1} \\
&=t_{{\gamma}_1}t_{\overline{{\gamma}_1}}^{-1} \cdots \underline{t_{\overline{{\gamma}_i}}^{-1}t_{{\gamma}_i}} t_{{\gamma}_{i+1}}t_{\overline{{\gamma}_{i+1}}}^{-1}  t_{{\gamma}_{i+2}}t_{\overline{{\gamma}_{i+2}}}^{-1} \cdots t_{{\gamma}_n}t_{\overline{{\gamma}_n}}^{-1} \\
&\xleftrightarrow{\mathrm{(HE1-1)}} t_{{\gamma}_1}t_{\overline{{\gamma}_1}}^{-1} \cdots t_{\overline{{\gamma}_i}}^{-1}t_{{\gamma}_i} t_{{\gamma}_{i+1}} (t_{{\gamma}_i}^{-1}t_{{{\gamma}_i}}) t_{\overline{{\gamma}_{i+1}}}^{-1}  t_{{\gamma}_{i+2}}t_{\overline{{\gamma}_{i+2}}}^{-1} \cdots t_{{\gamma}_n}t_{\overline{{\gamma}_n}}^{-1} \\
&\xleftrightarrow{\mathrm{(HE1-1)}} t_{{\gamma}_1}t_{\overline{{\gamma}_1}}^{-1} \cdots t_{\overline{{\gamma}_i}}^{-1}t_{{\gamma}_i} t_{{\gamma}_{i+1}} (t_{{\gamma}_i}^{-1}t_{{{\gamma}_i}}) t_{\overline{{\gamma}_{i+1}}}^{-1} (t_{{\gamma}_i}^{-1}t_{{{\gamma}_i}})  t_{{\gamma}_{i+2}}t_{\overline{{\gamma}_{i+2}}}^{-1} \cdots t_{{\gamma}_n}t_{\overline{{\gamma}_n}}^{-1} \\
&\xleftrightarrow{\mathrm{(HE1-2)}} t_{{\gamma}_1}t_{\overline{{\gamma}_1}}^{-1} \cdots \underline{t_{\overline{{\gamma}_i}}^{-1}t_{{\gamma}_i}} t_{{\gamma}_{i+1}} t_{\overline{{\gamma}_{i+1}}}^{-1} (t_{{\gamma}_i}^{-1}t_{{{\gamma}_i}})  t_{{\gamma}_{i+2}}t_{\overline{{\gamma}_{i+2}}}^{-1} \cdots t_{{\gamma}_n}t_{\overline{{\gamma}_n}}^{-1} \\
&= t_{{\gamma}_1}t_{\overline{{\gamma}_1}}^{-1} \cdots \underline{t_{{\gamma}_i}t_{\overline{{\gamma}_i}}^{-1}} t_{{\gamma}_{i+1}} t_{\overline{{\gamma}_{i+1}}}^{-1} (t_{{\gamma}_i}^{-1}t_{{{\gamma}_i}})  t_{{\gamma}_{i+2}}t_{\overline{{\gamma}_{i+2}}}^{-1} \cdots t_{{\gamma}_n}t_{\overline{{\gamma}_n}}^{-1} \\
&\xleftrightarrow{\mathrm{(HE1-1)}} t_{{\gamma}_1}t_{\overline{{\gamma}_1}}^{-1} \cdots t_{{\gamma}_i}t_{\overline{{\gamma}_i}}^{-1} t_{{\gamma}_{i+1}} (t_{\overline{{\gamma}_i}}t_{\overline{{\gamma}_i}}^{-1}) t_{\overline{{\gamma}_{i+1}}}^{-1} (t_{{\gamma}_i}^{-1}t_{{{\gamma}_i}})  t_{{\gamma}_{i+2}}t_{\overline{{\gamma}_{i+2}}}^{-1} \cdots t_{{\gamma}_n}t_{\overline{{\gamma}_n}}^{-1} \\
&\xleftrightarrow{\mathrm{(HE1-1)}} t_{{\gamma}_1}t_{\overline{{\gamma}_1}}^{-1} \cdots t_{{\gamma}_i}t_{\overline{{\gamma}_i}}^{-1} t_{{\gamma}_{i+1}} (t_{\overline{{\gamma}_i}}t_{\overline{{\gamma}_i}}^{-1}) t_{\overline{{\gamma}_{i+1}}}^{-1} (t_{\overline{{\gamma}_i}}t_{\overline{{\gamma}_i}}^{-1}) (t_{{\gamma}_i}^{-1}t_{{{\gamma}_i}})  t_{{\gamma}_{i+2}}t_{\overline{{\gamma}_{i+2}}}^{-1} \cdots t_{{\gamma}_n}t_{\overline{{\gamma}_n}}^{-1} \\
&\xleftrightarrow{\mathrm{(HE1-2)}} t_{{\gamma}_1}t_{\overline{{\gamma}_1}}^{-1} \cdots t_{{\gamma}_i}t_{\overline{{\gamma}_i}}^{-1} t_{{\gamma}_{i+1}} t_{\overline{{\gamma}_{i+1}}}^{-1} (t_{\overline{{\gamma}_i}}\underline{t_{\overline{{\gamma}_i}}^{-1}) (t_{{\gamma}_i}^{-1}t_{{{\gamma}_i}}})  t_{{\gamma}_{i+2}}t_{\overline{{\gamma}_{i+2}}}^{-1} \cdots t_{{\gamma}_n}t_{\overline{{\gamma}_n}}^{-1} \\
&=t_{{\gamma}_1}t_{\overline{{\gamma}_1}}^{-1} \cdots t_{{\gamma}_i}t_{\overline{{\gamma}_i}}^{-1} t_{{\gamma}_{i+1}}t_{\overline{{\gamma}_{i+1}}}^{-1} (t_{\overline{{\gamma}_i}}\underline{t_{{\gamma}_i}^{-1} t_{{\gamma}_i}t_{\overline{{\gamma}_i}}^{-1}}) t_{{\gamma}_{i+2}}t_{\overline{{\gamma}_{i+2}}}^{-1} \cdots t_{{\gamma}_n}t_{\overline{{\gamma}_n}}^{-1},
\end{split}
\end{equation*}
which shows that \(f \circ \pi\) is isomorphic to \(f_{\mathrm{i-1}} \circ \pi_{\mathrm{i-1}}\) from Theorem \ref{thm4.5}. By the same augment, we also have that \(f \circ \pi\) is isomorphic to \(f_{\mathrm{i-2}} \circ \pi_{\mathrm{i-2}}\), where \(f_{\mathrm{i-2}}:X_{\mathrm{i-2}} \to \Sigma\) is a genus-\(g\) non-orientable Lefschetz fibration which is given by \({t_{c_1;\theta_{c_1}}} \cdots {t_{c_{i-1};\theta_{c_{i-1}}}} ({t_{c_{i+1};\theta_{c_{i+1}}}} {t_{c_{i+1};\theta_{c_{i+1}}}}^{-1}) {t_{c_{i};\theta_{c_{i}}}} {t_{c_{i+1};\theta_{c_{i+1}}}} \cdots {t_{c_n;\theta_{c_{n}}}}\) and \(\pi_{\mathrm{i-2}}:\widetilde{X_{\mathrm{i-2}}} \to X_{\mathrm{i-2}}\) denotes the standard orientation double covering map. Moreover, let \(f_{\mathrm{ii}}:X_{\mathrm{ii}} \to \Sigma\) be a genus-\(g\) non-orientable Lefschetz fibration which is constructed by \({t_{\omega(c_1);\omega_*(\theta_{c_1})}} \cdots {t_{\omega(c_n);\omega_*(\theta_{c_n})}}\)  for a fixed auto-diffeomorphism \(\omega:N_g \to N_g\). It follows from the same discussion in the proof of Corollary \ref{cor4.3} and the relation \eqref{(4.3)} that a monodromy factorization of \(f_{\mathrm{ii}} \circ \pi_{\mathrm{ii}}\) is produced by \(t_{\widetilde{\omega}(\gamma_1)}t_{\widetilde{\omega}(\overline{\gamma_1})}^{-1} \cdots t_{\widetilde{\omega}(\gamma_n)}t_{\widetilde{\omega}(\overline{\gamma_n})}^{-1}\), here \(\pi_{\mathrm{ii}}:\widetilde{X_{\mathrm{ii}}} \to X_{\mathrm{ii}}\) is the standard orientation double covering map and  \(\widetilde{\omega}:\Sigma_{g-1} \to \Sigma_{g-1}\) is the orientation preserving lift of \(\omega\). Theorem \ref{thm4.5} thus states that \(f \circ \pi\) is isomorphic to \(f_{\mathrm{ii}} \circ \pi_{\mathrm{ii}}\). More generally, we can find that if positive monodromy factorizations of two genus-\(g\) non-orientable Lefschetz fibrations \(f_j:X_j \to \Sigma\) change each other by applying the transformations (i-1)-(ii) in Theorem \ref{thm1.2} a finite number of times, then the genus-\((g-1)\) achiral Lefschetz fibtations \(f_j \circ \pi_j:\widetilde{X_j} \to \Sigma\) determined by the standard orientation double covering maps \(\pi_j:\widetilde{X_j} \to X_j\) are isomorphic \((j=1,2)\). Hence, there are orientation preserving diffeomorphisms \(\widetilde{H}:\widetilde{X_1} \to \widetilde{X_2}\) and \(h:\Sigma \to \Sigma\) such that \((f_2 \circ \pi_2) \circ \widetilde{H}=h \circ (f_1 \circ \pi_1)\), i.e., we get the following commutative diagram:
\begin{equation*}
\xymatrix{
\widetilde{X_1}\ar[r]^-{\widetilde{H}}\ar[d]_-{\pi_1}\ar@{}[rd]|{ }&\widetilde{X_2}\ar[d]^-{\pi_2}\\
X_1 \ar[d]_-{f_1} &X_2\ar[d]^-{f_2}\\
\Sigma \ar[r]_-{h}&\Sigma.
}
\end{equation*}
Since \(\pi_2 \circ \widetilde{H}\) is an orientation double covering map, for the orientation reversing involution \(\overline{r} \in \mathrm{Aut}(\pi_2 \circ \widetilde{H})\), \((\pi_2 \circ \widetilde{H}) \circ \overline{r}=\pi_2 \circ \widetilde{H}\) holds. In particular, \((\pi_2 \circ \widetilde{H})(x,+o_x)=(\pi_2 \circ \widetilde{H})(x,-o_x)\) for each \(x \in X_1\). Therefore, we can define a map \(H:X_1 \to X_2\) as \(H(x):=(\pi_2 \circ \widetilde{H})(x,o_x)\) for any \(x \in X_1\). Since \(\pi_1\) and \(\pi_2\) are surjective submersions, it follows that \(H\) is a diffeomorphism. Furthermore, we immediately see that \(H\) satisfies \(f_2 \circ H=h \circ f_1\), and thus, \(f_1\) is isomorphic to \(f_2\).
\par
The proof of Theorem \ref{thm1.2} is complete.
\end{proof}

Department of Mathematics, Faculty of Science, Okayama University, Okayama 700-8530, Japan \par
\textit{Email address}: p4bd8ad0@s.okayama-u.ac.jp

\end{document}